\DeclareSymbolFont{rsfscript}{OMS}{rsfs}{m}{b}
\DeclareSymbolFontAlphabet{\mathrsfs}{rsfscript}
\definecolor{shadecolor}{gray}{0.95}
\def\longmapright#1{\hspace{0.3em}\smash{
     \mathop{\longrightarrow}\limits^{#1}}\hspace{0.3em}}
\def\longisom{{\longmapright{\sim}}}
\newtheorem{theo}{Theorem}[section]
\newtheorem{prop}[theo]{Proposition}
\newtheorem{lem}[theo]{Lemma}
\newtheorem{coro}[theo]{Corollary}
\def\equat{\refstepcounter{theo}\begin{equation}}
\def\endequat{\end{equation}}
\def\IG{{\mathfrak I}}
    \def\ZM{{\mathbb{Z}}}
  \def\cb{{\mathbf c}}  \def\CC{{\mathcal{C}}}
    \def\EC{{\mathcal{E}}}
  \def\hb{{\mathbf h}}  \def\HC{{\mathcal{H}}}
    \def\KC{{\mathcal{K}}}
    \def\LC{{\mathcal{L}}}
    \def\MC{{\mathcal{M}}}
    \def\PC{{\mathcal{P}}}
    \def\RC{{\mathcal{R}}}
    \def\VC{{\mathcal{V}}}
    \def\XC{{\mathcal{X}}}
          \def\aov{{\overline{a}}}
\def\Mov{{\overline{M}}}          \def\mov{{\overline{m}}}
\def\g{\gamma}
\def\G{\Gamma}
\def\ph{\varphi}
\def\L{\Lambda}
\def\s{\sigma}
\DeclareMathOperator{\maps}{{\mathrm{Maps}}}
\def\to{\rightarrow}
\def\longto{\longrightarrow}
\def\fonctio#1#2#3#4{\begin{array}{ccc}
{#1} & \longto & {#2} \\
{#3} & \longmapsto & {#4} 
\end{array}}
\def\vide{\varnothing}
\def\DS{\displaystyle}
\def\finl{~$\blacksquare$}
\def\lexp#1#2{\kern\scriptspace\vphantom{#2}^{#1}\kern-\scriptspace#2}
\def\le{\hspace{0.1em}\mathop{\leqslant}\nolimits\hspace{0.1em}}
\def\ge{\hspace{0.1em}\mathop{\geqslant}\nolimits\hspace{0.1em}}
\mathchardef\inferieur="321E
\mathchardef\superieur="321F
\def\eqna{\begin{eqnarray*}}
\def\endeqna{\end{eqnarray*}}
\def\itemth#1{\item[${\mathrm{(#1)}}$]}
\long\def\@car#1#2\@nil{#1}
\long\def\@first#1#2{#1}
\long\def\@second#1#2{#2}
\long\def\ifempty#1{\expandafter\ifx\@car#1@\@nil @\@empty
  \expandafter\@first\else\expandafter\@second\fi}
\def\boitegrise#1#2{\begin{centerline}{\fcolorbox{black}{shadecolor}{~
    \begin{minipage}[t]{#2}{\vphantom{~}#1\vphantom{$A_{\DS{A_A}}$}}
            \end{minipage}~}}\end{centerline}\medskip}
\theoremstyle{remark}
\theoremstyle{plain}
\def\xyinj{\ar@{^{(}->}}
\def\xysur{\ar@{->>}}
\def\hlinewd#1{%
\noalign{\ifnum0=`}\fi\hrule \@height #1 %
\futurelet\reserved@a\@xhline}
\newlength\epaisLigne
\begin{document}

\baselineskip=16pt

\title{Vogan classes and cells in the unequal parameter case}

\author{{\sc C\'edric Bonnaf\'e}}
\address{
Institut de Math\'ematiques et de Mod\'elisation de Montpellier (CNRS: UMR 5149), 
Universit\'e Montpellier 2,
Case Courrier 051,
Place Eug\`ene Bataillon,
34095 MONTPELLIER Cedex,
FRANCE} 

\makeatletter
\email{cedric.bonnafe@univ-montp2.fr}
\makeatother

%
%


\date{\today}

\thanks{The author is partly supported by the ANR (Project No ANR-12-JS01-0003-01 ACORT)}

\begin{abstract} 
Kazhdan and Lusztig proved that Vogan classes are unions of cells in the equal parameter case. 
We extend this result to the unequal parameter case.
\end{abstract}

\maketitle

\pagestyle{myheadings}

\markboth{\sc C. Bonnaf\'e}{\sc Vogan classes}

Let $(W,S)$ be a Coxeter system and let $\ph : S \to \ZM_{> 0}$ be a weight function. 
To this datum is associated a partition of $W$ into left, right and two-sided cells. 
Determining these partitions is a difficult problem, with deep connections 
(whenever $W$ is a finite or an affine Weyl group) with representations 
of reductive groups, singularities of Schubert cells, geometry of unipotent classes. 

In their original paper, Kazhdan and Lusztig described completely this partition 
whenever $W$ is the symmetric group in terms of the Robinson-Schensted correspondence. 
Their main tool is the so-called $*$-operation. It is defined in any Coxeter group 
(whenever there exists $s$, $t \in S$ such that $st$ has order $3$): they proved 
that it provides some extra-properties of cells, {\it whenever $\ph$ is constant}.
Our aim in this 
paper is to prove that Kazhdan-Lusztig result relating cells and the $*$-operation 
holds in full generality (this result has also been proved independently 
and simultaneously by M. Geck~\cite{geck}). 

\bigskip

\noindent{\bf Commentary.}  In~\cite{bonnafe b}, the author used improperly 
the $*$-operation in the unequal parameter context. The present paper 
justifies {\it a posteriori} what was, at that time, a big mistake!

\bigskip

\noindent{\bf Acknowledgement.} 
I would like to thank M. Geck for pointing out some references and for 
his useful remarks.

\bigskip

\noindent{\bf Notation.}  
We fix in this paper a Coxeter system $(W,S)$ and a totally ordered abelian group $\G$. 
We use an exponential notation for the group algebra $A=\ZM[\G]$:
$$A=\mathop{\oplus}_{\g \in \G} \ZM v^\g,$$
with $v^\g v^{\g'}=v^{\g+\g'}$ for all $\g$, $\g' \in \G$. If $\g_0 \in \G$, we set 
$$\G_{\!\ge \g_0}=\{\g \in \G~|~\g \ge \g_0\},$$
$$\G_{\!> \g_0}=\{\g \in \G~|~\g > \g_0\},$$
$$A_{\!\ge \g_0}=\mathop{\oplus}_{\g \ge \g_0} \ZM v^\g,$$
$$A_{\!> \g_0}=\mathop{\oplus}_{\g > \g_0} \ZM v^\g$$
and similarly for $\G_{\!\le \g_0}$, $\G_{\!< \g_0}$, $A_{\!\le \g_0}$ and $A_{\!< \g_0}$.
We denote by $\overline{\hphantom{x}\vphantom{a}} : A \to A$ the involutive automorphism 
such that $\overline{v^\g}=v^{-\g}$.

\bigskip

\section{Preliminaries}\label{section:invo}

\bigskip

\boitegrise{{\bf Hypothesis and notation.} {\it In this section, and only in this section, 
we fix an $A$-module $\MC$ and we assume that:
\begin{itemize}
\itemth{P1} $\MC$ admits an $A$-basis $(m_x)_{x \in X}$, where $X$ is a {\it poset}. 
We set 
$$\MC_{> 0} = \oplus_{x \in X} A_{>0} m_x.$$
\itemth{P2} $\MC$ admits a semilinear involution $\overline{\hphantom{x}\vphantom{a}} : \MC \to \MC$. 
We set
$$\MC_{\mathrm{skew}} = \{m \in \MC~|~m+\overline{m}=0\}.$$
\itemth{P3} If $x \in X$, then $\DS{\overline{m}_x \equiv m_x 
\mod\Bigl(\mathop{\oplus}_{y < x} A m_y\Bigr)}$
\itemth{P4} If $x \in X$, then the set $\{y \in X~|~y \le x\}$ is finite.
\end{itemize}}\vskip-0.5cm}{0.75\textwidth}

\vskip0.5cm

\def\invotext{$\overline{\hphantom{x}\vphantom{a}}$}

\begin{prop}\label{theo:inf-anti}
The $\ZM$-linear map 
$$\fonctio{\MC_{>0}}{\MC_{\mathrm{skew}}}{m}{m-\overline{m}}$$
is an isomorphism.
\end{prop}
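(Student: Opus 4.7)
The plan is to prove injectivity and surjectivity separately, each by a leading-term argument on the poset $X$; the decisive input is property (P3).

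For injectivity, suppose $m = \sum_x c_x m_x \in \MC_{>0}$ satisfies $m = \overline{m}$, and assume for contradiction that $m \neq 0$. Pick $x$ maximal in the (finite) set $\{x \in X \mid c_x \neq 0\}$. By (P3), each $\overline{m_y}$ equals $m_y$ plus an $A$-combination of $m_z$ with $z < y$; hence the coefficient of $m_x$ in $\overline{m} = \sum_y \overline{c_y}\,\overline{m_y}$ is exactly $\overline{c_x}$. The identity $c_x = \overline{c_x}$ then contradicts $c_x \in A_{>0}$ and $\overline{c_x} \in A_{<0}$.

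For surjectivity, given $n \in \MC_{\mathrm{skew}}$ set $\widehat{S}(n) := \bigcup_{x \in \supp(n)} \{y \in X \mid y \le x\}$, a finite set by (P4) together with the finiteness of $\supp(n)$. I induct on $|\widehat{S}(n)|$. If $n = 0$, take $m = 0$. Otherwise, writing $n = \sum b_x m_x$, pick $x_0$ maximal in $\supp(n)$. The same leading-term computation as above shows the $m_{x_0}$-coefficient of $\overline{n}$ is $\overline{b_{x_0}}$, so $n + \overline{n} = 0$ yields $b_{x_0} + \overline{b_{x_0}} = 0$. Expanding $b_{x_0} = \sum_\g \b_\g v^\g$, this forces $\b_{-\g} = -\b_\g$ and $\b_0 = 0$; hence $c := \sum_{\g > 0} \b_\g v^\g$ lies in $A_{>0}$ and satisfies $c - \overline{c} = b_{x_0}$. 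Setting $n' := n - (c m_{x_0} - \overline{c m_{x_0}})$ produces a new element of $\MC_{\mathrm{skew}}$ whose $m_{x_0}$-coefficient vanishes, while (P3) constrains every new contribution from $\overline{c m_{x_0}}$ to indices strictly below $x_0$. Thus $\widehat{S}(n') \subseteq \widehat{S}(n) \setminus \{x_0\}$, and by induction there exists $m' \in \MC_{>0}$ with $m' - \overline{m'} = n'$; then $m := m' + c m_{x_0}$ does the job.

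The only delicate point is arranging the induction measure to strictly decrease at each step. Maximality of $x_0$ in $\supp(n)$ guarantees $x_0 \notin \widehat{S}(n')$, and (P4) is precisely what makes $|\widehat{S}(n)|$ a legitimate induction variable. Everything else is a routine expansion controlled by (P3).
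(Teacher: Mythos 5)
Your proof is correct and follows essentially the same route as the paper's: a unitriangularity (leading-term) argument driven by (P3), with the scalar case of $A$ handled by splitting off the positive exponents, injectivity by comparing top coefficients, and surjectivity by subtracting $c\,m_{x_0}-\overline{c\,m_{x_0}}$ and inducting. The only difference is organizational—you induct on the size of the downward closure $\widehat{S}(n)$ (finite by (P4)) instead of first reducing to finite $X$ and fixing a compatible enumeration—which changes nothing essential.
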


\begin{proof}
First, note that the corresponding result for the $A$-module $A$ itself holds. In other words, 
\equat\label{eq:inf-anti}
\text{\it The map $A_{> 0} \to A_{\mathrm{skew}}$, $a \mapsto a -\overline{a}$ is an isomorphism.}
\endequat
Indeed, if $a \in A_{\mathrm{skew}}$, write $a=\sum_{\g \in \G} r_\g v^\g$, with $r_\g \in R$. Now, 
if we set $a_+=\sum_{\g > 0} r_\g v^\g$, then $a=a_+ - \overline{a}_+$. This shows the surjectivity, 
while the injectivity is trivial.

\medskip

Now, let $\L : \MC_{>0} \to \MC_{\mathrm{skew}}$, $m \mapsto m - \overline{m}$. 
For $\XC \subset X$, we set $\MC^\XC=\oplus_{x \in \XC} A \, m_x$ and $\MC_{>0}^\XC=\oplus_{x \in \XC} A_{>0}\, m_x$. 
Assume that, for all $x \in \XC$ and all $y \in X$ such that $y \le x$, then $y \in \XC$. By hypothesis, 
$\MC^\XC$ is stabilized by the involution ~\invotext. 
Since $X$ is the union of such finite $\XC$ (by hypothesis), it shows that we may, and we will, 
assume that $X$ is finite. Let us write $X=\{x_0,x_1,\dots,x_n\}$ in such a way that, 
if $x_i \le x_j$, then $i \le j$ (this is always possible). For simplifying notation, 
we set $m_{x_i}=m_i$. Note that, by hypothesis, 
$$\overline{m}_i \in m_i + \Bigl(\mathop{\oplus}_{0 \le j < i} A\, m_j\Bigr).\leqno{(*)}$$
In particular, $\overline{m}_0 = m_0$. 

Now, let $m \in \MC_{> 0}$ be such that $\overline{m}=m$ and assume that $m \neq 0$. 
Write $m=\sum_{i=0}^r a_i m_i$, with $r \le n$, $a_i \in A_{>0}$ and $a_r \neq 0$. Then, by hypothesis, 
$$\overline{m} \equiv \overline{a}_r m_r \mod\Bigl(\mathop{\oplus}_{0 \le j <i} A m_j\Bigr).$$
Since $\overline{m}=m$, this forces $\overline{a}_r=a_r$, which is impossible 
(because $a_r \in A_{>0}$ and $a_r \neq 0$). So $\L$ is injective.

\medskip

Let us now show that $\L$ is surjective. So, 
let $m \in \MC_{\mathrm{skew}}$, and assume that $m \neq 0$ (for otherwise there is nothing to prove). 
Write $m=\sum_{i=0}^r a_i m_{x_i}$, with $r \le n$, $a_i \in A$ and $a_r \neq 0$. 
We shall prove by induction on $r$ that there exists $\mu \in \MC$ such that $m=\mu - \overline{\mu}$. 
If $r=0$, then the result follows from~(\ref{eq:inf-anti}) and the fact that $\overline{m}_0=m_0$. 
So assume that $r > 0$. By hypothesis, 
$$m + \overline{m} \equiv (a_r + \overline{a}_r) m_{r} \mod \MC^{\XC_{r-1}},$$
where $\XC_j=\{x_0,x_1,\dots,x_j\}$. 
Since $m + \overline{m}=0$, this forces $a_r \in A_{\mathrm{skew}}$. So, by~(\ref{eq:inf-anti}), 
there exists $a \in A_{> 0}$ such that $a-\overline{a} = a_r$. 
Now, let $m'= m - a m_{r} + \overline{a} \overline{m}_{r}$. Then 
$m' + \overline{m}' = 0$ and $m' \in \oplus_{0 \le j < r} A \, m_j$. So, by the induction hypothesis, 
there exists $\mu' \in \MC_{>0}$ such that $m'=\mu' - \overline{\mu}'$. 
Now, set $\mu = a m_{r} + \mu'$. Then $\mu \in \MC_{>0}$ and $m=\mu-\overline{\mu}=\L(\mu)$, 
as desired.                                                   
\end{proof}

\begin{coro}\label{coro:anti-stable}
Let $m \in \MC$. Then there exists a unique $M \in \MC$ such that 
$$
\begin{cases}
\Mov=M,\\
M \equiv m \mod \MC_{> 0}.\\
\end{cases}
$$
\end{coro}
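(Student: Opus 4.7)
The plan is to deduce the corollary directly from Proposition~\ref{theo:inf-anti} by rephrasing the problem in terms of the map $\L : \MC_{>0} \to \MC_{\mathrm{skew}}$, $m \mapsto m - \overline{m}$, which was shown to be a bijection.

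First I would handle \emph{existence}. Given $m \in \MC$, look for $M$ in the form $M = m + n$ with $n \in \MC_{>0}$. The condition $\overline{M} = M$ is then equivalent to $n - \overline{n} = \overline{m} - m$. The right-hand side $\overline{m} - m$ manifestly lies in $\MC_{\mathrm{skew}}$ (it is killed by $1 + \overline{\hphantom{x}\vphantom{a}}$). So Proposition~\ref{theo:inf-anti} furnishes a (unique) $n \in \MC_{>0}$ with $\L(n) = \overline{m} - m$, and setting $M = m + n$ gives a bar-invariant element congruent to $m$ modulo $\MC_{>0}$.

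For \emph{uniqueness}, suppose $M$ and $M'$ both satisfy the two conditions. Then $M - M' \in \MC_{>0}$ (since both are congruent to $m$ mod $\MC_{>0}$) and $\overline{M - M'} = M - M'$, so $\L(M - M') = 0$. The injectivity part of Proposition~\ref{theo:inf-anti} then gives $M = M'$.

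The argument is essentially a one-line consequence of the proposition, so there is no real obstacle; all the work is already in the proof of Proposition~\ref{theo:inf-anti}. The only thing to be careful about is not to confuse the roles of the two conditions, and in particular to notice that $\overline{m} - m$ automatically lies in the domain of $\L^{-1}$, so that no additional hypothesis on $m$ is needed.
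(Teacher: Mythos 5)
Your argument is correct and is essentially the paper's own proof: write $M=m+\mu$ with $\mu\in\MC_{>0}$, observe that bar-invariance of $M$ is equivalent to $\mu-\overline{\mu}=\overline{m}-m\in\MC_{\mathrm{skew}}$, and invoke the bijectivity of the map from Proposition~\ref{theo:inf-anti}. Your separate uniqueness check via injectivity applied to $M-M'$ is just an unpacking of the same bijectivity statement, so nothing differs in substance.
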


\begin{proof}
Setting $M=m+\mu$, the problem is equivalent to find $\mu \in \MC_{> 0}$ such that $\overline{m+\mu} = m+\mu$. 
This is equivalent to find $\mu \in \MC_{> 0}$ such that 
$\mu-\overline{\mu} = \mov-m$: since $\mov - m \in \MC_{\mathrm{skew}}$, this problem 
admits a unique solution, thanks to Theorem~\ref{theo:inf-anti}. 
\end{proof}

\bigskip

The Corollary~\ref{coro:anti-stable} can be applied to the $A$-module 
$A$ itself. However, in this case, its proof becomes obvious: 
if $a_\circ = \sum_{\g \in \G} a_\g v^\g$, 
then $a=\sum_{\g \le 0} a_\g v^\g + \sum_{\g > 0} a_{-\g} v^\g$ is the unique 
element of $A$ such that $\aov=a$ and $a \equiv a_\circ \mod A_{>0}$.


\bigskip

\begin{coro}\label{coro:kl-base-triangulaire}
Let $\XC$ be a subset of $X$ such that, if $x \le y$ and $y \in \XC$, then $x \in \XC$. 
Let $M \in \MC$ be such that $\Mov=M$ and $M \in \MC^\XC + \MC_{> 0}$. Then 
$M \in \MC^\XC$. 
\end{coro}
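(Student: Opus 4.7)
The plan is to reduce to the uniqueness part of Corollary~\ref{coro:anti-stable}, applied to the quotient module $\MC' := \MC/\MC^\XC$. First I would observe that $\MC^\XC$ is stable under the bar involution: for any $x \in \XC$, property~(P3) gives $\overline{m}_x \equiv m_x$ modulo $\oplus_{y < x} A\, m_y$, and the assumption that $\XC$ is downward-closed forces every such $y$ to lie in $\XC$. Consequently the bar involution descends to a semilinear involution on $\MC'$, which I endow with the basis $(m'_x)_{x \in X \setminus \XC}$ of images of the original basis elements and with the poset structure on $X \setminus \XC$ inherited from $X$.

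Next, I would check that $\MC'$ satisfies the analogues of (P1)--(P4). This is essentially routine: (P1), (P2) and (P4) are immediate, while (P3) follows by projecting the original congruence and observing that any term $A\, m_y$ with $y \in \XC$ is killed in the quotient. In particular, Corollary~\ref{coro:anti-stable} is available for $\MC'$.

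Finally, let $N \in \MC'$ denote the image of $M$. Since the bar involution commutes with the projection, $N$ is bar-invariant. Writing $M = M^\XC + M^+$ with $M^\XC \in \MC^\XC$ and $M^+ \in \MC_{>0}$, the class $N$ equals the image of $M^+$, which lies in $\MC'_{>0} := \oplus_{x \in X \setminus \XC} A_{>0}\, m'_x$. Hence $N$ is a bar-invariant element of $\MC'$ congruent to $0$ modulo $\MC'_{>0}$; the same is trivially true of $0 \in \MC'$, so the uniqueness in Corollary~\ref{coro:anti-stable} applied to $\MC'$ with $m = 0$ forces $N = 0$, i.e.\ $M \in \MC^\XC$. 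The only delicate point is the stability of $\MC^\XC$ under the bar involution, for which the downward-closedness of $\XC$ is essential; once this is in place the remainder is pure bookkeeping.
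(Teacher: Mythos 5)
Your proof is correct, but it runs through the quotient rather than the submodule, which is not quite what the paper does. The paper's own argument stays inside $\MC$ and $\MC^\XC$: it picks $M_0 \in \MC^\XC$ with $M \equiv M_0 \mod \MC_{>0}$, applies the \emph{existence} part of Corollary~\ref{coro:anti-stable} to the datum $\MC^\XC$ (which satisfies (P1)--(P4) precisely because $\XC$ is downward closed, so that the bar involution stabilizes $\MC^\XC$) to produce a bar-invariant $M' \in \MC^\XC$ with $M' \equiv M_0 \mod \MC_{>0}^\XC \subset \MC_{>0}$, and then invokes the \emph{uniqueness} part in the ambient module $\MC$ to conclude $M = M' \in \MC^\XC$. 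You instead descend to $\MC' = \MC/\MC^\XC$, verify (P1)--(P4) for the quotient datum (the same downward-closedness being the point where bar-stability of $\MC^\XC$ enters), and apply only the uniqueness statement there, with $m=0$, to get that the image of $M$ vanishes. Both routes hinge on the same key result, Corollary~\ref{coro:anti-stable}, and on bar-stability of $\MC^\XC$; yours has the small advantage of never using the existence half (injectivity of $m \mapsto m - \overline{m}$ suffices), at the cost of setting up and checking the axioms for a new datum $(\MC', (m'_x)_{x \in X\setminus\XC})$, while the paper's version reuses the already-noted fact that $\MC^\XC$ itself satisfies the hypotheses and is correspondingly shorter. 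Your verification of (P3) for the quotient (terms $A\,m_y$ with $y \in \XC$ die, the rest index the induced poset on $X\setminus\XC$) is exactly the needed check, so there is no gap.
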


\begin{proof}
Let $M_0 \in \MC^\XC$ be such that $M \equiv M_0 \mod \MC_{>0}$. 
From the existence statement of Corollary~\ref{coro:anti-stable} applied to $\MC^\XC$, there 
exists $M' \in \MC^\XC$ such that $\Mov'=M'$ and 
$M' \equiv M_0 \mod \MC_{> 0}^\XC$. The fact that $M=M' \in \MC^\XC$ 
now follows from the uniquenes statement of Corollary~\ref{coro:anti-stable}.
\end{proof}

\bigskip

\begin{coro}\label{coro:kl-general}
Let $x \in X$. Then there exists a unique element $M_x \in \MC$ 
such that 
$$\begin{cases}
\overline{M}_x = M_x,\\
M_x \equiv m_x \mod \MC_{> 0}.
\end{cases}$$
Moreover, $M_x \in m_x + \mathop{\oplus}_{y < x} A_{> 0} m_y$ and 
$(M_x)_{x \in X}$ is an $A$-basis of $\MC$.
\end{coro}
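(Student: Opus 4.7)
Everything will follow from Corollaries~\ref{coro:anti-stable} and~\ref{coro:kl-base-triangulaire}, with no further work on the involution itself. Applying Corollary~\ref{coro:anti-stable} directly to the element $m = m_x$ yields the existence and uniqueness of $M_x$ satisfying $\overline{M}_x = M_x$ and $M_x \equiv m_x \mod \MC_{>0}$, which is the first half of the statement.

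For the support statement $M_x \in m_x + \mathop{\oplus}_{y < x} A_{>0}\, m_y$, I would set $\XC = \{y \in X~|~y \le x\}$, which is finite by (P4) and downward closed by construction. Since $m_x \in \MC^\XC$ and $M_x - m_x \in \MC_{>0}$, we have $M_x \in \MC^\XC + \MC_{>0}$, so Corollary~\ref{coro:kl-base-triangulaire} gives $M_x \in \MC^\XC$. Writing $M_x = c_x m_x + \sum_{y < x} c_y m_y$, the congruence $M_x \equiv m_x \mod \MC_{>0}$ forces $c_y \in A_{>0}$ for every $y < x$ and $c_x - 1 \in A_{>0}$. To pin down $c_x = 1$, I would invoke (P3): the coefficient of $m_x$ in $\overline{M}_x$ equals $\overline{c}_x$, so the bar-invariance of $M_x$ yields $\overline{c}_x = c_x$; writing $c_x = 1 + d$ with $d \in A_{>0}$ gives $d = \overline{d}$, and a bar-invariant element of $A_{>0}$ must vanish (as follows, e.g., from the injectivity part of~(\ref{eq:inf-anti}) applied to $A$). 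This normalization is the only genuinely delicate step of the argument.

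Finally, for the basis property, the transition matrix from $(m_x)_{x \in X}$ to $(M_x)_{x \in X}$ is unitriangular with respect to the partial order on $X$, with unit diagonal, and each $M_x$ has finite support thanks to (P4). Given any $m = \sum_{x \in \XC_0} a_x m_x$ with finite support $\XC_0$, I would enlarge $\XC_0$ to a finite downward-closed set $\XC$ (possible by (P4)) and invert the unitriangular matrix recorded on $\XC$ to rewrite $m$ as an $A$-linear combination of the $M_x$ with $x \in \XC$; linear independence is immediate from unitriangularity, so $(M_x)_{x \in X}$ is indeed an $A$-basis of $\MC$.
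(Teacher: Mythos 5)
Your proof is correct and follows essentially the same route as the paper: existence and uniqueness come from Corollary~\ref{coro:anti-stable}, the confinement of the support to $\{y \in X~|~y \le x\}$ comes from the earlier corollaries applied to that finite downward-closed set, and the basis statement comes from unitriangularity. The only difference is that you make explicit the normalization $c_x=1$ (via bar-invariance of the top coefficient together with the injectivity in~(\ref{eq:inf-anti})), a step the paper's terse proof leaves implicit, and your treatment of it is correct.
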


\begin{proof}
The existence and uniqueness of $M_x$ follow from Corollary~\ref{coro:anti-stable}. 
The statement about the base change follows by applying this existence and uniqueness 
to $\MC^{X_x}$, where $X_x=\{y \in X~|~ y \le x\}$.

Finally, the fact that $(M_x)_{x \in X}$ is an $A$-basis of $\MC$ follows from the fact that the base 
change from $(m_x)_{x \in X}$ to $(M_x)_{x \in X}$ is unitriangular.
\end{proof}

\bigskip

%
%
%
%
%
%
%
%

Corollary~\ref{coro:kl-general} gathers in a single general statement the argument  
given by Lusztig~\cite{lusztig kl} for the construction of the {\it Kazhdan-Lusztig basis} 
of a Hecke algebra (which is different from the argument contained in the original paper by 
Kazhdan and Lusztig~\cite{KL}) and the construction, still due to 
Lusztig~\cite[Theorem~3.2]{lusztig canonical}, of the {\it canonical basis} associated 
with quantum groups.

\section{The main result}\label{section:main}

\medskip

\subsection{Kazhdan-Lusztig basis}
We fix in this paper a {\it weight function} $\ph : S \to \G_{\! > 0}$ 
(i.e. $\ph(s)=\ph(t)$ whenever $s$ and $t$ are conjugate in $W$). We denote by 
$\HC$ the Hecke algebra associated with $(W,S,\ph)$: as an $A$-module, 
$\HC$ admits an $A$-basis $(T_w)_{w \in W}$ and the multiplication is completely 
determined by the following rules:
$$\begin{cases}
T_w T_{w'} = T_{ww'} & \text{if $\ell(ww')=\ell(w)+\ell(w')$,}\\
(T_s - v^{\ph(s)})(T_s+v^{-\ph(s)})=0, & \text{if $s \in S$.}
\end{cases}$$
Here, $\ell : W \to \ZM_{\! \ge 0}$ denote the {\it length function} on $W$. 

We denote by~\invotext~: $\HC \to \HC$ the involutive antilinear automorphism 
of $\HC$ such that
$$\overline{T}_w=T_{w^{-1}}^{-1}.$$
The triple $(\HC,(T_w)_{w \in W},$\invotext$)$ satisfies the properties (P1), (P2), (P3) 
and (P4) of the previous section. Therefore, if $w \in W$, there exists a unique 
element $C_w \in \HC$ such that 
$$C_w \equiv T_w \mod \HC_{\! > 0}$$
(see Corollary~\ref{coro:kl-general}) and $(C_w)_{w \in W}$ is an $A$-basis 
of $\HC$ (see Corollary~\ref{coro:kl-base-triangulaire}), called the 
Kazhdan-Lusztig basis.

\bigskip

\def\pre#1{\leqslant_{#1}}
\def\preplus#1#2{\leqslant_{#1}^{#2}}
\def\prel{\leqslant_{L}}
\def\prelh{\leqslant_{L}^\hb}
\def\prer{\leqslant_{R}}
\def\prelr{\leqslant_{LR}}
\def\rell{\stackrel{L}{\longleftarrow}}
\def\rellh{\stackrel{\hb,L}{\longleftarrow}}
\def\relrh{\stackrel{\hb,R}{\longleftarrow}}
\def\relr{\stackrel{R}{\longleftarrow}}
\def\relplus#1{\stackrel{#1}{\longleftarrow}}
\def\siml{\sim_{L}}
\def\simr{\sim_{R}}
\def\simlr{\sim_{LR}}

\subsection{Cells}
In this context, we define the preorders $\prel$, $\prer$, $\prelr$ and the equivalence relations 
$\siml$, $\simr$ and $\simlr$ as in~\cite{KL} or~\cite{lusztig}. If $C$ is a left cell 
(i.e. an equivalence class for $\siml$) of $W$, we set 
$$\HC^{\!\prel C} = \mathop{\oplus}_{w \prel C} A~C_w,\qquad
\HC^{\!<_L C} = \mathop{\oplus}_{w <_L C} A~C_w$$
$$M(C)=\HC^{\!\prel C}/\HC^{\!<_L C}.\leqno{\text{and}}$$
By the very definition of the preorder $\prel$, $\HC^{\!\prel C}$ and $\HC^{\!<_L C}$ 
are left ideals of $\HC$, so $M(C)$ inherits a structure of $\HC$-module. If $w \in C$, we denote by 
$c_w$ the image of $C_w$ in $M(C)$: then $(c_w)_{w \in C}$ is an $A$-basis of $M(C)$.

\bigskip

\subsection{Parabolic subgroups} 
If $I \subset S$, we set $W_I=\langle I \rangle$: it is a standard parabolic subgroup 
of $W$ and $(W_I,I)$ is a Coxeter system. We also set
$$\HC_I=\mathop{\oplus}_{w \in W_I} A~T_w.$$
It is a subalgebra of $\HC$, naturally isomorphic to the Hecke algera 
associated with $(W_I,I,\ph_I)$, where $\ph_I : I \to \ZM_{\! > 0}$ denotes the restriction of $\ph$.

We denote by $X_I$ the set of elements $x \in W$ which have minimal length 
in $xW_I$: it is well-known that the map $X_I \to W/W_I$, $x \mapsto xW_I$ is bijective and that
\eqna
X_I&=&\{x \in W_I~|~\forall~s \in I,~\ell(xs) > \ell(x)\} \\
&=& \{x \in W_I~|~\forall~w \in W_I,~\ell(xw)=\ell(x)+\ell(w)\}.
\endeqna
As a consequence, the right $\HC_I$-module $\HC$ is free (hence flat) 
with basis $(T_x)_{x \in X_I}$. 
This remark has the following consequence (in the next lemma, if $E$ is a subset of $\HC$, 
then $\HC E$ denotes the left ideal generated by $E$):

\bigskip

\begin{lem}\label{lem:platitude}
If $\IG$ and $\IG'$ are left ideals of $\HC_I$ such that $\IG \subset \IG'$, then:
\begin{itemize}
\itemth{a} $\HC\IG=\oplus_{x \in X_I} T_x \IG$.

\itemth{b} The natural map $\HC \otimes_{\HC_I} \IG \to \HC\IG$ is an isomorphism of $\HC$-modules.

\itemth{c} The natural map $\HC \otimes_{\HC_I} (\IG'/\IG) \to \HC \IG'/\HC \IG$ is an isomorphism. 
\end{itemize}
\end{lem}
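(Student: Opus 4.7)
The plan is to derive all three statements from the fact, recalled just before the lemma, that $\HC$ is free as a right $\HC_I$-module with basis $(T_x)_{x \in X_I}$, together with the length-additivity property $\ell(xw) = \ell(x) + \ell(w)$ when $x \in X_I$ and $w \in W_I$.

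For part (a), I would note that, since $\IG$ is a left ideal of $\HC_I$, one has $\HC_I \IG = \IG$, so
\[ \HC\IG \;=\; \Bigl(\mathop{\oplus}_{x \in X_I} T_x \HC_I\Bigr) \IG \;=\; \sum_{x \in X_I} T_x \IG. \]
The sum is direct: length-additivity gives $T_x T_w = T_{xw}$ for $x \in X_I$, $w \in W_I$, hence $T_x \HC_I \subseteq \oplus_{w \in W_I} A\, T_{xw}$; since the cosets $xW_I$ for $x \in X_I$ partition $W$, the subspaces $T_x \HC_I$ are linearly independent in $\HC$, and a fortiori so are their submodules $T_x \IG$.

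For part (b), I would use that $\HC = \oplus_{x \in X_I} T_x \HC_I$ as a right $\HC_I$-module, which gives
\[ \HC \otimes_{\HC_I} \IG \;=\; \mathop{\oplus}_{x \in X_I} (T_x \HC_I) \otimes_{\HC_I} \IG \;\cong\; \mathop{\oplus}_{x \in X_I} \IG, \]
the second isomorphism exploiting that $T_x \HC_I \cong \HC_I$ as right $\HC_I$-modules via $T_x h \mapsto h$ (well-defined by the linear independence of the $T_{xw}$ for $w \in W_I$). Under these identifications the natural multiplication map $\HC \otimes_{\HC_I} \IG \to \HC\IG$ becomes $(i_x)_x \mapsto \sum_x T_x i_x$; by (a) its image is $\oplus_x T_x \IG = \HC \IG$, and the directness of that sum provides injectivity.

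For part (c), I would invoke flatness: since $\HC$ is a free, hence flat, right $\HC_I$-module, applying $\HC \otimes_{\HC_I} -$ to the short exact sequence
\[ 0 \longto \IG \longto \IG' \longto \IG'/\IG \longto 0 \]
of left $\HC_I$-modules preserves exactness. Combining this with the identifications from (b) applied to $\IG$ and to $\IG'$ identifies $\HC \otimes_{\HC_I} (\IG'/\IG)$ with $\HC \IG' / \HC \IG$. The argument is essentially formal once the right $\HC_I$-module freeness of $\HC$ is granted; the only point requiring mild care is the directness in (a), which rests on the disjointness of the cosets $xW_I$ and on length-additivity.
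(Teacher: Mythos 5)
Your proof is correct and follows exactly the route the paper intends: the lemma is stated there as a direct consequence of the remark that $\HC$ is free (hence flat) as a right $\HC_I$-module with basis $(T_x)_{x \in X_I}$, and your argument simply spells out the details of that deduction (directness via the disjointness of the cosets $xW_I$, and flatness for part (c)).
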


\bigskip

We will now recall results from Geck~\cite{geck induction} about the parabolic induction of cells. 
First, it is clear that $(C_w)_{w \in W_I}$ is the Kazhdan-Lusztig basis 
of $\HC_I$. We can then define a preorder $\prel^I$ and its associated equivalence class $\siml^I$ 
on $W_I$ in the same way as $\prel$ and $\siml$ are defined for $W$. 
If $w \in W$, then there exists a unique $x \in X_I$ and a unique 
$w' \in W_I$ such that $w=xw'$: we then set
$$G_w^I=T_x C_{w'}.$$
Finally, if $C$ is a left cell in $W_I$, then we define the left $\HC_I$-module $M^I(C)$ similarly 
as $M(C')$ was defined for left cells $C'$ of $W$.

\bigskip

\begin{theo}[Geck]\label{theo:geck}
Let $E$ be a subset of $W_I$ such that, if $x \in E$ and if $y \in W_I$ is such that $y \prel^I x$, then 
$y \in E$. Let $\IG=\oplus_{w \in E} A~C_w$. Then
$$\HC \IG = \mathop{\oplus}_{w \in X_I \cdot E} A~G_w^I = \mathop{\oplus}_{w \in X_I \cdot E} A~C_w.$$
Moreover, the transition matrix between the $A$-basis $(C_w)_{w \in X_I \cdot E}$ 
and the $A$-basis $(G_w^I)_{w \in X_I \cdot E}$ is unitriangular (for the Bruhat order) 
and its non-diagonal entries belong to $A_{>0}$.
\end{theo}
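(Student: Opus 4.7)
The plan is to obtain the first equality from Lemma~\ref{lem:platitude}, and to derive the rest by applying the abstract formalism of Section~\ref{section:invo} to the submodule $\MC:=\HC\IG$ of $\HC$ equipped with the basis $(G^I_w)_{w\in X_I\cdot E}$. The hypothesis on $E$ says exactly that $\IG=\bigoplus_{w'\in E} A\,C_{w'}$ is a left ideal of $\HC_I$; Lemma~\ref{lem:platitude}(a) then gives $\HC\IG=\bigoplus_{x\in X_I} T_x\IG=\bigoplus_{x\in X_I,\ w'\in E} A\,T_xC_{w'}$, and since $T_xC_{w'}=G^I_{xw'}$ and the map $(x,w')\mapsto xw'$ is bijective from $X_I\times E$ onto $X_I\cdot E$, the first equality follows.

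Next, I index $\MC$ by the poset $X_I\cdot E$ endowed with the Bruhat order of $W$. Properties (P1) and (P4) are immediate. Property (P2) holds because $\overline{\IG}=\IG$ (each $C_{w'}$ is bar-invariant) and $\overline{\HC}=\HC$. The main obstacle is property (P3):
$$\overline{G^I_w}\;\equiv\;G^I_w \mod \bigoplus_{z\in X_I\cdot E,\ z<w} A\,G^I_z.$$
Writing $w=xw'$ with $x\in X_I$, $w'\in E$, and using property (P3) for $\HC$ applied to $\overline{T_x}$, we have
$$\overline{G^I_w}-G^I_w\;=\;(\overline{T_x}-T_x)C_{w'}\;=\;\sum_{y<x} r_{y,x}\,T_yC_{w'}\qquad (r_{y,x}\in A).$$
For each such $y$, decompose $y=y_1y_2$ with $y_1\in X_I$ and $y_2\in W_I$, so that $T_y=T_{y_1}T_{y_2}$; then $T_{y_2}C_{w'}\in\IG$ expands in the Kazhdan--Lusztig basis of $\HC_I$ as $\sum_u\alpha_uC_u$ with $u\prel^I w'$, hence $u\in E$. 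Therefore $T_yC_{w'}=\sum_u\alpha_u\,G^I_{y_1u}$, and a standard Bruhat-order argument (using $y<x$ together with the fact that the $T$-basis support of $T_{y_2}C_{w'}$ lies in $\{z\le y_2w'\}$) yields $y_1u<xw'=w$ for every non-zero contribution, establishing (P3).

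With (P1)--(P4) in place, Corollary~\ref{coro:kl-general} applied to $\MC$ produces, for each $w\in X_I\cdot E$, a unique bar-invariant element $\tilde C_w\in\MC$ with $\tilde C_w\equiv G^I_w\mod \MC_{>0}$, and satisfying $\tilde C_w\in G^I_w+\bigoplus_{z<w,\,z\in X_I\cdot E} A_{>0}\,G^I_z$; moreover the family $(\tilde C_w)$ is an $A$-basis of $\MC$. Since $G^I_w=T_xC_{w'}\equiv T_xT_{w'}=T_w \mod \HC_{>0}$, the element $\tilde C_w$ is bar-invariant in $\HC$ and congruent to $T_w$ modulo $\HC_{>0}$; the uniqueness clause of Corollary~\ref{coro:kl-general}, applied now to $\HC$ itself, forces $\tilde C_w=C_w$. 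This simultaneously yields the second equality $\HC\IG=\bigoplus_{w\in X_I\cdot E} A\,C_w$ and the unitriangular shape of the change-of-basis matrix, with non-diagonal entries in $A_{>0}$, as required.
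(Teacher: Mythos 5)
Your overall architecture is sound and is essentially the right way to recover this statement from the formalism of Section~\ref{section:invo}: the first equality via Lemma~\ref{lem:platitude}, then checking (P1)--(P4) for $(\HC\IG,(G_w^I)_{w\in X_I\cdot E})$ with the Bruhat order, and finally identifying the abstract canonical basis with $(C_w)$ by the uniqueness in Corollary~\ref{coro:kl-general} (note the paper itself gives no proof, quoting Geck's induction paper). The endgame of your argument --- $G_w^I\equiv T_w \bmod \HC_{>0}$, hence $\tilde C_w=C_w$, hence the second equality and the unitriangularity with entries in $A_{>0}$ --- is correct.

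However, there is a genuine gap at the one place where all the real content of Geck's theorem sits, namely (P3). You justify the inequality $y_1u<xw'$ by invoking ``the fact that the $T$-basis support of $T_{y_2}C_{w'}$ lies in $\{z\le y_2w'\}$''. That fact is false: already for $y_2=w'=s\in I$ one has $T_sC_s=v^{\ph(s)}C_s$, whose $T$-support is $\{e,s\}$, while $y_2w'=e$. (Such configurations do occur in your setting, e.g.\ $y=t<x=ts$ with $y_1=e$, $y_2=w'=t$ in type $A_2$.) The conclusion $y_1u<xw'$ is nevertheless true, but proving it is precisely the nontrivial part: one needs, say, an induction on $\ell(y_2)$ using the recursion for $T_sC_u$ to show that the Kazhdan--Lusztig support of $T_{y_2}C_{w'}$ is contained in $\bigcup_{y_2'\le y_2}\{u\in W_I~|~u\le y_2'w'\}$, and then two applications of the lifting property of the Bruhat order (first, $u\le v$ in $W_I$ implies $y_1u\le y_1v$ because $y_1\in X_I$; second, $z\le x$ together with $\ell(xw')=\ell(x)+\ell(w')$ implies $zw'\le xw'$), equality being excluded since $y_1\le y<x$ forces $y_1\neq x$. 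Without an argument of this kind (or a citation of Deodhar/Geck where it is carried out), the triangularity claim --- which is the whole point of the theorem --- is not established; everything else in your write-up goes through once (P3) is properly proved.
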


\bigskip

\begin{coro}[Geck]\label{coro:geck}
We have:
\begin{itemize}
\itemth{a} $\prel^I$ and $\siml^I$ are just the restriction of $\prel$ and $\siml$ to $W_I$ 
(and so we will use only the notation $\prel$ and $\siml$).

\itemth{b} If $C$ is a left cell in $W_I$, then $X_I \cdot C$ is a union of left cells of $W$.
\end{itemize}
\end{coro}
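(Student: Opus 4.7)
The plan is to exploit two structural principles. First, Theorem~\ref{theo:geck} asserts that whenever $E$ is $\prel^I$-downward-closed in $W_I$ and $\IG = \oplus_{z \in E} A C_z$, the module $\HC\IG$ is a left ideal of $\HC$ with $A$-basis $(C_w)_{w \in X_I \cdot E}$. Second, if $F \subseteq W$ is such that $\oplus_{w \in F} A C_w$ is a left ideal of $\HC$, then $F$ is $\prel$-downward-closed in $W$: stability under left multiplication by each $C_s$ forces $C_{y'}$ to lie in the ideal whenever $y \in F$ and $C_{y'}$ has nonzero coefficient in $C_s C_y$, and iteration gives closure under $\prel$. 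Combining these, $X_I \cdot E$ is automatically $\prel$-downward-closed in $W$ as soon as $E$ is $\prel^I$-downward-closed in $W_I$.

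For part~(a), the inclusion of $\prel^I$ into the restriction of $\prel$ is immediate, since the products $C_s C_z$ for $s \in I$ and $z \in W_I$ agree in $\HC_I$ and in $\HC$, so any elementary relation in $W_I$ is an elementary relation in $W$. For the reverse inclusion, fix $x \in W_I$ and apply Theorem~\ref{theo:geck} to $E = \{z \in W_I : z \prel^I x\}$: the resulting $\HC\IG$ is a left ideal containing $C_x$ whose basis is indexed by $X_I \cdot E$, so by the principle above $\{y \in W : y \prel x\} \subseteq X_I \cdot E$. If moreover $y \in W_I$, the uniqueness of the decomposition $W = X_I \cdot W_I$ forces $y \in E$, i.e.\ $y \prel^I x$.

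For part~(b), fix a left cell $C$ in $W_I$ and some $c_0 \in C$; by part~(a) the notations $\prel$ and $\siml$ are unambiguous on $W_I$. Set $E = \{z \in W_I : z \prel c_0\}$ and $E' = E \setminus C$. Both are $\prel$-downward-closed in $W_I$ (the claim for $E'$ being a short verification from the definition of $\siml$). Applying Theorem~\ref{theo:geck} to each yields two left ideals of $\HC$ with bases indexed by $X_I \cdot E$ and $X_I \cdot E'$, so both these sets are $\prel$-downward-closed in $W$. Using again the unique decomposition $W = X_I \cdot W_I$, one obtains $X_I \cdot C = (X_I \cdot E) \setminus (X_I \cdot E')$. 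Consequently, if $w \siml w'$ in $W$, the relations $w \prel w'$ and $w' \prel w$ give $w \in X_I \cdot E \iff w' \in X_I \cdot E$ and similarly for $E'$, so $w \in X_I \cdot C \iff w' \in X_I \cdot C$, proving that $X_I \cdot C$ is a union of left cells of $W$.

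The main technical point to articulate carefully is the downward-closure principle identifying left ideals of the form $\oplus_{w \in F} A C_w$ with $\prel$-downward-closed subsets of $W$; once this is in hand, the rest of the proof is clean bookkeeping through the unique coset decomposition $W = X_I \cdot W_I$.
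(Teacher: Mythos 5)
Your argument is correct, and it is essentially the intended derivation: the paper states this corollary without a separate proof, as an immediate consequence of Theorem~\ref{theo:geck}, and your route (left ideals spanned by Kazhdan--Lusztig basis elements correspond to $\prel$-downward-closed sets, combined with the unique decomposition $W = X_I \cdot W_I$) is exactly the standard way Geck's theorem yields both parts. The only point worth making explicit is the one you already flag, namely the downward-closure principle, whose verification is the routine induction on elementary relations you describe.
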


\bigskip

\section{Generalized $*$-operation}
\def\dotcup{\hskip1mm\dot{\cup}\hskip1mm}

\medskip

\boitegrise{{\bf Hypothesis and notation.} 
{\it We fix in this section, and only in this section, 
a subset $I$ of $S$, two left cells $C_1$ and $C_2$ of $W_I$, 
and we assume that:
\begin{itemize}
\itemth{V1} There exists a bijection $\s : C_1 \to C_2$ such 
that the $A$-linear map $M^I(C_1) \to M^I(C_2)$, $c_w \mapsto c_{\s(w)}$ is in fact $\HC_I$-linear.
\itemth{V2} If $\{i,j\}=\{1,2\}$, then $\{w \in W_I~|~w \in C_i$ and $w <_L C_j\}=\vide$.
\end{itemize}
We set $E_0=\{w \in W_I~|~w <_L C_1$ or $w <_L C_2\}$, $E_i=X_0 \dotcup C_i$ for $i \in \{1,2\}$
and
$$\HC_I^{(i)}=\mathop{\oplus}_{w \in E_i} A C_w$$
for $i \in \{0,1,2\}$.}}{0.75\textwidth}

\bigskip

\noindent{\sc Remark - } If $W$ is finite and if we assume that Lusztig's 
Conjectures~\cite[Conjectures~P1~to~P15]{lusztig} hold for $(W_I,I,\ph_I)$, 
then (V2) is a consequence of (V1).\finl

\bigskip

Note that $\HC_I^{(i)}$ is a left $\HC_I$-module for $i \in \{0,1,2\}$. By (V2), $\HC_I^{(i)}/\HC_I^{(0)}$ 
is a left $\HC_I$-module isomorphic to $M^I(C_i)$ (for $i \in \{1,2\}$) 
so it admits an $A$-basis $(c_w)_{w \in C_i}$. By (V1), the map $\s$ induces an isomorphism of $\HC_I$-modules
$$\HC_I^{(1)}/\HC_I^{(0)} \longisom \HC_I^{(2)}/\HC_I^{(0)}.\leqno{(\clubsuit)}$$
Let $\s^L : X_I \cdot C_1 \longisom X_I \cdot C_2$ denote the bijection induced by $\s$ 
(i.e. $\s^L(xw)=x\s(w)$ if $x \in X_I$ and $w \in C_1$). By Lemma~\ref{lem:platitude} 
and Theorem~\ref{theo:geck}, 
$$\HC \HC_I^{(i)} = \mathop{\oplus}_{w \in X_I \cdot E_i} A~G_w^I=
\mathop{\oplus}_{w \in X_I \cdot E_i} A~C_w.\leqno{(\diamondsuit)}$$
Now, if $i \in \{1,2\}$ and $w \in X_I \cdot C_i$, we denote by $g_w^I$ (respectively $\cb_w$) 
the image of $G_w^I$ (respectively $C_w$) in the quotient 
$\HC \HC_I^{(i)}/\HC \HC_I^{(0)}$. By Lemma~\ref{lem:platitude}, the isomorphism 
$(\clubsuit)$ induces an isomorphism of $\HC$-modules 
$$\s_* : \HC \HC_I^{(1)}/\HC \HC_I^{(0)} \longisom \HC \HC_I^{(2)}/\HC \HC_I^{(0)}$$
which is defined by
$$\s_*(g_w^I)=g_{\s^L(w)}^I$$
for all $w \in X_I \cdot C_1$. The key result of this section if the following one:

\bigskip

\begin{theo}\label{theo:main}
If $w \in X_I \cdot C_1$, then $\s_*(\cb_w)=\cb_{\s^L(w)}$.
\end{theo}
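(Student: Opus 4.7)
The plan is to apply the uniqueness statement of Corollary~\ref{coro:anti-stable} to the $\HC$-module $\MC = \HC\HC_I^{(2)}/\HC\HC_I^{(0)}$, characterizing $\cb_{\s^L(w)}$ as the unique bar-invariant element of $\MC$ congruent to $g^I_{\s^L(w)}$ modulo $\MC_{>0}$.

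First I would check that each $\HC\HC_I^{(i)}$, for $i \in \{0,1,2\}$, is bar-stable. This is immediate from Theorem~\ref{theo:geck}, which exhibits $(C_w)_{w \in X_I \cdot E_i}$ as an $A$-basis of $\HC\HC_I^{(i)}$, together with the bar-invariance of each $C_w$. Consequently both $\HC\HC_I^{(1)}/\HC\HC_I^{(0)}$ and $\MC$ inherit semilinear involutions from $\HC$. I would then equip $X_I \cdot C_2$ with the restriction of the Bruhat order and verify the hypotheses (P1)--(P4) of Section~\ref{section:invo} for $(\MC,(g^I_w)_{w \in X_I \cdot C_2})$. The only non-trivial point is (P3): by Theorem~\ref{theo:geck}, the transition matrix between $(\cb_w)$ and $(g^I_w)$ is unitriangular for the Bruhat order, so one can write $g^I_w = \cb_w + \sum_{y < w} a_{y,w}\, \cb_y$ for some $a_{y,w} \in A$ (the sum over $y \in X_I \cdot C_2$). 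Applying the bar involution and using that each $\cb_y$ is bar-invariant yields $\overline{g^I_w} - g^I_w \in \oplus_{y < w} A\,\cb_y = \oplus_{y < w} A\,g^I_y$, as required.

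Next I would verify that $\s_*$ commutes with the bar involution. By Lemma~\ref{lem:platitude}(c), $\s_*$ is the map obtained from the isomorphism $(\clubsuit)$ by applying $\HC \otimes_{\HC_I} -$, so it suffices to check bar-equivariance of the original map $\HC_I^{(1)}/\HC_I^{(0)} \to \HC_I^{(2)}/\HC_I^{(0)}$, $c_w \mapsto c_{\s(w)}$. But the elements $c_w$ on both sides are bar-invariant (being images of the bar-invariant $C_w$), so any element $m = \sum_w a_w c_w$ satisfies $\overline{m} = \sum_w \overline{a}_w c_w$, and the $A$-linearity of $\s$ gives $\s(\overline{m}) = \sum_w \overline{a}_w c_{\s(w)} = \overline{\s(m)}$.

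With these two ingredients the proof concludes in a few lines. For $w \in X_I \cdot C_1$, Theorem~\ref{theo:geck} gives $\cb_w = g^I_w + \sum_{y < w} p_{y,w}\, g^I_y$ with $p_{y,w} \in A_{>0}$ and $y$ ranging over $X_I \cdot C_1$; hence $\s_*(\cb_w) = g^I_{\s^L(w)} + \sum_{y < w} p_{y,w}\, g^I_{\s^L(y)} \equiv g^I_{\s^L(w)} \bmod \MC_{>0}$. Moreover $\s_*(\cb_w)$ is bar-invariant by the preceding step. The uniqueness part of Corollary~\ref{coro:anti-stable}, applied to $\MC$ with the basis $(g^I_w)_{w \in X_I \cdot C_2}$, then forces $\s_*(\cb_w) = \cb_{\s^L(w)}$. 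The main obstacle is the verification of (P3) for the basis $(g^I_w)$, since these elements are not themselves bar-invariant; passing through the already bar-invariant Kazhdan--Lusztig basis via Theorem~\ref{theo:geck} is the natural way around it.
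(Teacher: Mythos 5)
Your proof is correct and follows essentially the same route as the paper: both characterize the image by the uniqueness statement of Section~\ref{section:invo} applied to the datum $\bigl(\HC\HC_I^{(2)}/\HC\HC_I^{(0)},(g^I_w)_{w\in X_I\cdot C_2},\overline{\hphantom{x}\vphantom{a}}\bigr)$, using Geck's theorem for the congruence $\cb_w\equiv g^I_w$ and the compatibility of $\s_*$ with the bar involution and with the positive parts. The details you spell out (bar-stability of $\HC\HC_I^{(i)}$, property (P3) for the $g^I$-basis via unitriangularity, bar-equivariance of $\s_*$) are precisely the points the paper's proof treats as immediate consequences of Geck's theorem, so there is no substantive difference.
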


\bigskip

\begin{proof}
Let $i \in \{1,2\}$. For simplification, we set $M[i]=\HC \HC_I^{(i)}/\HC \HC_I^{(0)}$. 
By $(\diamondsuit)$, $(g_w^I)_{w \in X_I \cdot C_i}$ and $(\cb_w)_{w \in X_I \cdot C_i}$ are both 
$A$-bases of $M[i]$. We set 
$$M[i]_{\! > 0} = \mathop{\oplus}_{w \in X_I \cdot C_i} A_{\! > 0}~g_w^I.$$
By Geck's Theorem, 
$$\cb_w \equiv g_w^I \mod M[i]_{\! > 0}.\leqno{(\heartsuit)}$$
Moreover, the antilinear involution ~\invotext~ on $\HC$ stabilizes $\HC\HC_I^{(i)}$ and 
$\HC\HC_I^{(0)}$ so it induces an antilinear involution, still denoted by~\invotext, 
on $M[i]$. It is also clear that the isomorphism $\s_* : M[1] \to M[2]$, 
$g_w^I \mapsto g_{\s^L(w)}^I$ satisfies $\s_*(\overline{m})=\overline{\s_*(m)}$ 
and $\s_*(M[1]_{\! > 0})=M[2]_{\! > 0}$. 
Therefore, it follows from $(\heartsuit)$ that, if $w \in X_I \cdot C_1$, then
$$
\begin{cases}
\overline{\s_*(\cb_w)}=\s_*(\cb_w),\\
\s_*(\cb_w) \equiv g_{\s^L(w)}^I \mod M[2]_{\! > 0}.
\end{cases}
$$
But it follows again from Geck's Theorem that the datum $(M[i],(g_w^I)_{w \in X_I \cdot C_i},\text{\invotext})$ 
satisfies the properties (P1), (P2), (P3) and (P4) of \S\ref{section:invo}. So
$\s_*(\cb_w)=\cb_{\s^L(w)}$ by Corollary~\ref{coro:kl-general}.
\end{proof}

\bigskip

We can now state the main consequence of Theorem~\ref{theo:main}. We first need a notation: 
if $C_1 \neq C_2$ (which is the interesting case...), we extend $\s^L$ to an involution 
of the set $W$, by setting 
$$\s^L(w)=
\begin{cases}
w & \text{if $w \not\in X_I \cdot C_1 \dotcup X_I \cdot C_2$,}\\
\s^L(w) & \text{if $w \in X_I \cdot C_1$,}\\
(\s^L)^{-1}(w) & \text{if $w \in X_I \cdot C_2$.}\\
\end{cases}
$$
Note that $\s^L : W \to W$ is an involution.

\bigskip

\begin{theo}\label{theo:vogan}
Let $w$, $w' \in W$. Then $w \siml w'$ if and only if 
$\s^L(w) \sim \s^L(w')$. 
\end{theo}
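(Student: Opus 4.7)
The plan is to show that the restriction of the left-cell equivalence to $X_I\cdot C_i$ is already encoded in the pair $\bigl(M[i],(\cb_w)_{w\in X_I\cdot C_i}\bigr)$; since Theorem~\ref{theo:main} provides an $\HC$-linear isomorphism $\s_*:M[1]\longisom M[2]$ matching these Kazhdan-Lusztig bases via $\s^L$, the statement will then drop out.

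I would first reduce the problem. By Corollary~\ref{coro:geck}(b), each of $X_I\cdot C_1$ and $X_I\cdot C_2$ is a union of left cells of $W$. Since $\s^L$ is an involution which is the identity outside $X_I\cdot C_1\cup X_I\cdot C_2$ and swaps $X_I\cdot C_1$ with $X_I\cdot C_2$, the only non-trivial case is that in which both $w$ and $w'$ lie in $X_I\cdot C_1$ (the situation where both belong to $X_I\cdot C_2$ being symmetric, and all other configurations yielding the equivalence immediately because $\siml$-equivalent elements must lie in the same union of left cells).

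The key technical point is then to prove that, for $w,w'\in X_I\cdot C_i$, the relation $w\prel w'$ in $W$ coincides with its analogue defined inside $M[i]$: namely, the transitive closure of the relation asserting that $\cb_w$ appears with non-zero coefficient in $C_s\cdot\cb_{w'}$ for some $s\in S$. Agreement of the one-step relations is immediate, since the quotient map $\HC\HC_I^{(i)}\to M[i]$ sends $C_y$ to $\cb_y$ for $y\in X_I\cdot C_i$ and to $0$ for $y\in X_I\cdot E_0$; here one uses that $X_I\cdot E_0\cap X_I\cdot C_i=\vide$, which follows from (V2) together with the uniqueness of the factorisation $W=X_I\cdot W_I$. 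For the transitive closure, I would argue that any chain $w'=w_0\relplus{L}w_1\relplus{L}\cdots\relplus{L}w_n=w$ realising $w\prel w'$ in $W$ must lie entirely inside $X_I\cdot C_i$: by $(\diamondsuit)$, each $w_j$ belongs to $X_I\cdot E_i$ because $\HC\HC_I^{(i)}$ is a left ideal containing $C_{w'}$; and since $\HC\HC_I^{(0)}$ is also a left ideal, once the chain were to enter $X_I\cdot E_0$ it could never leave it again, which would contradict $w_n=w\in X_I\cdot C_i$. Hence the chain lies in $X_I\cdot C_i$ and descends to a chain in $M[i]$.

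With this key step in hand, the theorem follows at once from Theorem~\ref{theo:main}: the $\HC$-module isomorphism $\s_*$ carries the Kazhdan-Lusztig basis of $M[1]$ bijectively onto that of $M[2]$ via $\s^L$, so it preserves the preorder just described, and in particular preserves the restriction of $\siml$ to $X_I\cdot C_1\cup X_I\cdot C_2$. Combined with the first-step reduction, this yields the claim. The main obstacle is the ``chain cannot escape $X_I\cdot C_i$'' argument above; the remainder is bookkeeping with Geck's induction theorem and the decomposition $(\diamondsuit)$.
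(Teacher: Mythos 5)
Your proof is correct and follows essentially the same route as the paper: reduce via Corollary~\ref{coro:geck}(b) to the case $w,w'\in X_I\cdot C_1$, confine the connecting chains to $X_I\cdot C_1$, and transfer the non-vanishing of the structure constants through the isomorphism $\s_*$ of Theorem~\ref{theo:main}. The only (valid) variation is in the confinement step: the paper notes that all elements of the chains are $\siml$-equivalent to $w$ and invokes Corollary~\ref{coro:geck}(b) once more, whereas you use that $\HC\HC_I^{(0)}\subset\HC\HC_I^{(1)}$ are left ideals spanned by Kazhdan--Lusztig basis elements, so a chain starting and ending in $X_I\cdot C_1$ can never pass through $X_I\cdot E_0$.
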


\bigskip

\begin{proof}
First, let us write
$$C_xC_y=\sum_{z \in W} h_{x,y,z} C_z,$$
where $h_{x,y,z} \in A$.

Now, assume that $w \siml w'$. 
According to Corollary~\ref{coro:geck}(b), there exists a unique 
cell $C$ in $W_I$ such that $w$, $w' \in X_I\cdot C$. If $C \not\in \{C_1,C_2\}$, 
then $\s^L(w)=w$ and $\s^L(w')=w'$, so $\s^L(w) \siml \s^L(w')$. So we may assume that 
$C \in \{C_1,C_2\}$. Since $\s^L$ is involutive, we may assume that $C=C_1$. Therefore, 
$w$, $w' \in X_I \cdot C_1$. 

By the definition of $\prel$ and $\siml$, there exists four sequences $x_1$,\dots, $x_m$, $y_1$,\dots, $y_n$, 
$w_1$,\dots, $w_m$, $w_1'$,\dots, $w_n'$ such that:
$$
\begin{cases}
w_1=w, w_m=w',\\
w_1'=w',w_n'=w,\\
\forall~i \in \{1,2,\dots,m-1\},~h_{x_i,w_i,w_{i+1}} \neq 0,\\
\forall~j \in \{1,2,\dots,n-1\},~h_{y_j,w_j',w_{j+1}'} \neq 0.\\
\end{cases}
$$
Therefore, $w=w_1 \prel w_2 \prel \cdots \prel w_m=w'=w_1' \prel w_2' \prel \cdots \prel w_n'=w$ and so 
$w=w_1 \siml w_2 \siml \cdots \siml w_m=w'=w_1' \siml w_2' \siml \cdots \siml w_n'=w$. 
Again by Corollary~\ref{coro:geck}(b), $w_i$, $w_j' \in X_I\cdot C_1$. So it follows from 
Theorem~\ref{theo:main} that $h_{x,\s^L(w_i),\s^L(w_{i+1})}=h_{x,w_i,w_{i+1}}$ 
and $h_{x,\s^L(w_j'),\s^L(w_{j+1}')}=h_{y_j,w_j',w_{j+1}'}$ for all $x \in W$. Therefore,
$$
\begin{cases}
\forall~i \in \{1,2,\dots,m-1\},~h_{x_i,\s^L(w_i),\s^L(w_{i+1})} \neq 0,\\
\forall~j \in \{1,2,\dots,n-1\},~h_{y_j,\s^L(w_j'),\s^L(w_{j+1}')} \neq 0.\\
\end{cases}
$$
It then follows that 
\eqna
\s^L(w)=\s^L(w_1) \prel \s^L(w_2) \prel \cdots &\prel\!\!\! & \s^L(w_m)=\s^L(w')=\s^L(w_1') \\
&\prel\!\!\!& \s^L(w_2') \prel \cdots \prel \s^L(w_n')=\s^L(w),
\endeqna
and so $\s^L(w) \siml \s^L(w')$, as expected.
\end{proof}

\bigskip

\begin{coro}\label{coro:vogan-plus}
Let $C$ be a left cell of $W$. Then $\s^L(C)$ is a left cell of $W$ and 
the $A$-linear map $M(C) \to M(\s^L(C))$, $c_w \mapsto c_{\s^L(w)}$ is an isomorphism 
of $\HC$-modules.
\end{coro}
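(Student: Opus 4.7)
The plan is to reduce the $\HC$-linearity claim to an identity between structure constants of the Kazhdan--Lusztig basis, and then to extract that identity from Theorem~\ref{theo:main}. First, because $\s^L$ is an involution of $W$ preserving the equivalence relation $\siml$ (Theorem~\ref{theo:vogan}), it permutes the $\siml$-classes, so $\s^L(C)$ is a left cell. The assignment $c_w \mapsto c_{\s^L(w)}$ carries an $A$-basis of $M(C)$ bijectively onto an $A$-basis of $M(\s^L(C))$, hence is automatically an $A$-linear isomorphism; only its compatibility with the $\HC$-action is at stake.

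To set up the structure-constant reduction, write $C_x C_w = \sum_{z \in W} h_{x,w,z}\, C_z$. Since $C_w$ lies in the left ideal $\HC^{\!\prel C}$ for $w \in C$, the coefficient $h_{x,w,z}$ vanishes unless $z \prel C$, and any term with $z <_L C$ dies in $M(C)$; hence
$$
C_x \cdot c_w \;=\; \sum_{w' \in C} h_{x,w,w'}\, c_{w'} \quad \text{in } M(C),
$$
and a parallel formula holds in $M(\s^L(C))$. Thus it suffices to prove
$$
h_{x,w,w'} \;=\; h_{x,\,\s^L(w),\,\s^L(w')} \qquad \text{for all } x \in W \text{ and } w, w' \in C.
$$

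By Corollary~\ref{coro:geck}(b), both $X_I \cdot C_1$ and $X_I \cdot C_2$ are unions of left cells of $W$, so $C$ is either disjoint from $X_I \cdot C_1 \cup X_I \cdot C_2$ or entirely contained in one of them. In the disjoint case, $\s^L$ restricts to the identity on $C$ by its very definition, and the required identity is trivial. If $C \subseteq X_I \cdot C_1$, then expanding $C_x \cdot \cb_w$ in $M[1] = \HC\HC_I^{(1)}/\HC\HC_I^{(0)}$ yields $C_x \cdot \cb_w = \sum_{z \in X_I \cdot C_1} h_{x,w,z}\, \cb_z$ (terms indexed by $z \in X_I \cdot E_0$ are killed because $\HC\HC_I^{(0)}$ is a left ideal); applying the $\HC$-linear isomorphism $\s_*$ and invoking Theorem~\ref{theo:main} to identify $\s_*(\cb_w) = \cb_{\s^L(w)}$ gives the structure-constant identity for every $z \in X_I \cdot C_1$, which covers $z = w' \in C$. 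The case $C \subseteq X_I \cdot C_2$ is symmetric (or follows from the previous one since $\s^L$ is involutive).

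The main obstacle is not in this corollary but in Theorem~\ref{theo:main} itself; once that theorem is available, the present argument is a routine chase of structure constants through left-ideal filtrations, the only substantive point being that Corollary~\ref{coro:geck}(b) forces $C$ into exactly one of three mutually exclusive positions relative to $X_I \cdot C_1 \cup X_I \cdot C_2$.
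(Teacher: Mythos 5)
Your proof is correct and is essentially the argument the paper intends: the paper's own proof is a one-line ``follows from Theorems~\ref{theo:main} and~\ref{theo:vogan}'', and your elaboration via the structure-constant identity $h_{x,w,w'}=h_{x,\s^L(w),\s^L(w')}$ is exactly the mechanism already used inside the paper's proof of Theorem~\ref{theo:vogan}. You have simply filled in the details (trichotomy of $C$ via Corollary~\ref{coro:geck}(b), expansion in $M[1]$, transport by $\s_*$) that the paper leaves to the reader.
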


\bigskip

\begin{proof}
This follows immediately from Theorems~\ref{theo:main} and~\ref{theo:vogan}.
\end{proof}

\bigskip

\section{Generalized Vogan classes}

\medskip
\def\etoile#1{\bigstar_{\! #1}}

\subsection{Dihedral parabolic subgroups}
Let $\EC_\ph$ denote the set of pairs $(s,t)$ of elements of $S$ such that 
one of the following holds:
\begin{itemize}
\itemth{O} $st$ has odd order $\ge 3$; or

\itemth{E} $\ph(s) < \ph(t)$ and $st$ has even order $\ge 4$.
\end{itemize}
We fix in this subsection a pair $(s,t) \in \EC_\ph$. 
Let $w_{s,t}$ denote the longest element of $W_{s,t}$.
We then set:
$$R_s=\{w \in W_{s,t}~|~\ell(ws) < \ell(w)~\text{and}~\ell(wt) > \ell(w)\} = (W_{s,t} \cap X_t)\setminus X_s$$
$$R_t=\{w \in W_{s,t}~|~\ell(ws) > \ell(w)~\text{and}~\ell(wt) < \ell(w)\} = (W_{s,t} \cap X_s)\setminus X_t.\leqno{\text{and}}$$
If we are in the case (O), we then set
$$\G_s=R_s\qquad\text{and}\qquad \G_t=R_t$$
while, if we are in the case (E), we set
$$\G_s=R_s \setminus \{s\}\qquad \G_t=R_t \setminus \{w_{s,t} s\}.$$
Finally, if we are in the case (O), we $\etoile{s,t} : \G_s \to \G_t$, $w \mapsto w_{s,t} w$ 
while, in the case (E), we set $\etoile{s,t} : \G_s \to \G_t$, $w \mapsto ws$.
Then, by~\cite[\S{7}]{lusztig}, we have:

\bigskip

\begin{lem}\label{lem:st}
If $(s,t) \in \EC_\ph$, then 
$\G_s$ and $\G_t$ are two left cells of $W_{s,t}$ and $\etoile{s,t} : \G_s \to \G_t$ 
is a bijection which satisfies the properties {\rm (V1)} and {\rm (V2)} of \S\ref{section:main}.
\end{lem}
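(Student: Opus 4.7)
The plan is to reduce everything to a direct computation inside the dihedral group $W_{s,t}$ and then appeal to Lusztig's explicit description of the left cells and the Kazhdan-Lusztig basis in the dihedral case~\cite[\S7]{lusztig}. I would treat the odd case (O) and the even case (E) separately, since the cell structure and the form of $\etoile{s,t}$ differ.

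In the odd case (O), the hypothesis that $\ph$ is a weight function forces $\ph(s)=\ph(t)$ (since $s$ and $t$ are conjugate when $st$ has odd order), so we are in the equal-parameter situation. Lusztig's analysis of $(W_{s,t},\{s,t\},\ph_{s,t})$ shows that the left cells are exactly $\{1\}$, $R_s$, $R_t$ and $\{w_{s,t}\}$. Thus $\G_s=R_s$ and $\G_t=R_t$ are left cells. To check (V1) I would compute, for each $r\in\{s,t\}$, the action of $C_r$ on $c_w$ for $w \in R_s$ from the known structure constants in a dihedral group, and compare with the action on $c_{w_{s,t}w}$ for $w \in R_s$; Lusztig's tables show these agree, giving the $\HC_{s,t}$-linearity of $c_w\mapsto c_{w_{s,t}w}$. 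Property (V2) is immediate from the cell description, since the two cells $R_s$ and $R_t$ are incomparable for $\prel$ (they form a single two-sided cell but there is no strict inequality between their elements).

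In the even case (E) with $\ph(s)<\ph(t)$, Lusztig's computation for the dihedral group with unequal parameters gives the left-cell decomposition
\[
W_{s,t}=\{1\}\ \dotcup\ \{s\}\ \dotcup\ (R_s\setminus\{s\})\ \dotcup\ (R_t\setminus\{w_{s,t}s\})\ \dotcup\ \{w_{s,t}s\}\ \dotcup\ \{w_{s,t}\},
\]
so $\G_s=R_s\setminus\{s\}$ and $\G_t=R_t\setminus\{w_{s,t}s\}$ are left cells. For (V1) I would use Lusztig's explicit formulas for the $C_w$ and for $C_s C_w$, $C_t C_w$ in this unequal-parameter dihedral case to verify that the bijection $w\mapsto ws$ sends the action of each $C_r$ on $c_w$ to the corresponding action on $c_{ws}$ modulo the strictly smaller cells; the asymmetry of the parameters is precisely what is needed to make right multiplication by $s$ (rather than by $w_{s,t}$) the correct intertwiner. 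Property (V2) again follows from inspection of the cell poset.

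The main obstacle is the bookkeeping in case (E): the Kazhdan-Lusztig basis of a dihedral Hecke algebra with unequal parameters is less symmetric than in the equal case, and verifying the $\HC_{s,t}$-linearity of $c_w\mapsto c_{ws}$ amounts to comparing two explicit families of structure constants that depend delicately on whether $\ell(wr)$ exceeds or falls below $\ell(w)$ and on whether the resulting element lies in $\G_s$, in $\G_t$, or in the strictly smaller cells. Once one has the formulas from \cite[\S7]{lusztig} at hand, both (V1) and (V2) reduce to a case-by-case check; the statement of the lemma is then essentially a reformulation of those computations.
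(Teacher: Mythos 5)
Your proposal is correct and follows essentially the same route as the paper, which disposes of this lemma simply by citing Lusztig's explicit treatment of the dihedral case (\cite[\S 7]{lusztig}): the cell partitions you list in cases (O) and (E), the incomparability giving (V2), and the $\HC_{s,t}$-linearity of $c_w\mapsto c_{\s(w)}$ giving (V1) are exactly the computations recorded there. Your sketch merely spells out the case-by-case verification that the paper leaves to the citation.
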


\bigskip

So $\etoile{s,t}$ induces a bijection $\etoile{s,t}^L : W \to W$ and, according to 
Theorem~\ref{theo:vogan}, the following holds:

\bigskip

\begin{coro}\label{coro:star}
If $(s,t) \in \EC_\ph$ and if $C$ is a left cell of $W$, then $C'=\etoile{s,t}^L(C)$ is also 
a left cell and the map $M(C) \to M(C')$, $c_w \mapsto c_{\etoile{s,t}^L(w)}$ is an isomorphism 
of $\HC$-modules.
\end{coro}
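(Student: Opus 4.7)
The plan is simply to reduce Corollary~\ref{coro:star} to Corollary~\ref{coro:vogan-plus} by taking $I=\{s,t\}$, $C_1=\G_s$, $C_2=\G_t$ and $\s=\etoile{s,t}$. The verification of hypotheses has essentially been packaged into Lemma~\ref{lem:st}, which asserts exactly that $\G_s$ and $\G_t$ are left cells of $W_I=W_{s,t}$ and that $\etoile{s,t}$ is a bijection $\G_s\to\G_t$ satisfying the conditions (V1) and (V2) required in \S\ref{section:main}.

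Once this identification is made, the involution $\etoile{s,t}^L : W\to W$ defined via
\[
\etoile{s,t}^L(w)=
\begin{cases}
w & \text{if } w\not\in X_I\cdot\G_s\,\dotcup\,X_I\cdot\G_t,\\
\etoile{s,t}^L(w) & \text{if } w\in X_I\cdot\G_s,\\
(\etoile{s,t}^L)^{-1}(w) & \text{if } w\in X_I\cdot\G_t,
\end{cases}
\]
coincides with the involution $\s^L$ constructed just before Theorem~\ref{theo:vogan}. Therefore Corollary~\ref{coro:vogan-plus} applies verbatim: if $C$ is a left cell of $W$, then $\etoile{s,t}^L(C)$ is a left cell of $W$, and the $A$-linear map sending $c_w$ to $c_{\etoile{s,t}^L(w)}$ is an isomorphism of $\HC$-modules $M(C)\longisom M(\etoile{s,t}^L(C))$.

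Thus the only non-trivial step is the appeal to Lemma~\ref{lem:st} to verify (V1) and (V2), everything else being a direct specialization of the general machinery built in Sections~\ref{section:invo} and~\ref{section:main}. I expect no obstacle: the hard work was done in Theorem~\ref{theo:main} and its consequences.
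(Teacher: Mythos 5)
Your proof is correct and follows exactly the route the paper intends: the paper states Corollary~\ref{coro:star} as an immediate consequence of Lemma~\ref{lem:st} (which supplies (V1) and (V2) for $I=\{s,t\}$, $C_1=\G_s$, $C_2=\G_t$) combined with the general results of Section~\ref{section:main}, precisely as you do by invoking Corollary~\ref{coro:vogan-plus}. Nothing is missing; your citation of Corollary~\ref{coro:vogan-plus} is in fact the sharpest reference, since it contains both the left-cell statement and the module isomorphism.
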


\bigskip

\noindent{\sc Remark - } The bijection $\etoile{s,t}^L$ is called the $*$-operation and is usually 
denoted by $w \mapsto m^*$.\finl

\bigskip

\subsection{Generalized Vogan classes}
Let $\VC_\ph$ be the group of bijections of $W$ generated by all the $\etoile{s,t}^L$, 
where $(s,t)$ runs over $\EC_\ph$. We will call it the {\it left Vogan group} ({\it associated with $\ph$}). 
Let $\PC(S)$ denotes the set of subsets of $S$ and, if $w \in W$, we set
$$\RC(w) =\{s \in S~|~\ell(ws) < \ell(w)\}.$$
It is called the {\it right descent set} of $w$. 
It is well-known that the map $\RC : W \to \PC(S)$ is constant on left cells~\cite[Lemma~8.6]{lusztig}.

\bigskip

\noindent{\sc Example - } It can be checked by using computer computations in {\tt GAP} that 
$$|\VC_\ph|=2^{40}\cdot 3^{20} \cdot 5^8 \cdot 7^4 \cdot 11^2$$
whenever $(W,S)$ is of type $H_4$.\finl

\bigskip

Now, let $\maps(\VC_\ph,\PC(S))$ denote the set of maps $\VC_\ph \to \PC(S)$. Then, to each 
$w \in W$, we associate the map $\tau_w^\ph \in \maps(\VC_\ph,\PC(S))$ which is defined by
$$\tau_w^\ph(\s)=\RC(\s(w))$$
for all $\s \in \VC_\ph$. The fiber of the map $\tau^\ph : W \to \maps(\VC_\ph,\PC(S))$ 
are called the {\it generalized Vogan left classes}. In other words, two elements $x$ and $y$ of $W$ 
lie in the same generalized Vogan left class if and only if
$$\forall~\s \in \VC_\ph,~\RC(\s(x))=\RC(\s(y)).$$
It follows from Corollary~\ref{coro:star} that:

\bigskip

\begin{theo}\label{theo:tau-invariant}
Generalized Vogan left classes are unions of left cells.
\end{theo}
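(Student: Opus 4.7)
The plan is to deduce the statement directly from Corollary~\ref{coro:star} together with the fact (cited from~\cite[Lemma~8.6]{lusztig}) that the right descent set $\RC$ is constant on left cells. All the real work has already been done; this theorem is essentially just the correct packaging of Corollary~\ref{coro:star}.

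First I would observe that the Vogan group $\VC_\ph$ acts on $W$ by permutations that preserve the partition of $W$ into left cells. Indeed, each generator $\etoile{s,t}^L$ sends every left cell $C$ of $W$ to a left cell $\etoile{s,t}^L(C)$ by Corollary~\ref{coro:star}. Since $\VC_\ph$ is generated by these involutions, a straightforward induction on the length of an expression of $\s$ as a product of generators shows that $\s(C)$ is a left cell of $W$ for every $\s \in \VC_\ph$ and every left cell $C$. In other words, for any $x$, $y \in W$,
$$x \siml y \Longrightarrow \s(x) \siml \s(y)\qquad\text{for all $\s \in \VC_\ph$.}$$

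Next, assuming $x \siml y$ and fixing any $\s \in \VC_\ph$, I would apply this to obtain $\s(x) \siml \s(y)$, and then invoke the constancy of the right descent set on left cells to conclude that $\RC(\s(x)) = \RC(\s(y))$. This equality holds for every $\s \in \VC_\ph$, so by the definition of $\tau^\ph$ we have $\tau_x^\ph = \tau_y^\ph$, i.e.\ $x$ and $y$ lie in the same generalized Vogan left class.

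Thus every left cell is contained in a single generalized Vogan left class, which is exactly the assertion of the theorem. There is no real obstacle at this stage: the content lies in Corollary~\ref{coro:star} (and therefore, upstream, in Theorems~\ref{theo:main} and~\ref{theo:vogan}), together with the well-known fact that $\RC$ is constant on left cells.
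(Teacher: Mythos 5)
Your proof is correct and follows exactly the route the paper intends: the theorem is stated as an immediate consequence of Corollary~\ref{coro:star} together with the constancy of $\RC$ on left cells (\cite[Lemma~8.6]{lusztig}), and you have simply spelled out the induction over the generators $\etoile{s,t}^L$ of $\VC_\ph$ and the resulting equality $\tau_x^\ph=\tau_y^\ph$ for $x\siml y$.
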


\bigskip

%
%
%

\subsection{Knuth classes}
Let $s \in S$. We now define a permutation $\kappa_s^\ph$ of $W$ as follows:
$$\kappa_s^\ph(w)=
\begin{cases}
sw & \text{if there exists $t \in S$ such that $tw < w < sw < tsw$ and $\ph(s) \le \ph(t)$,}\\
sw & \text{if there exists $t \in S$ such that $tsw < sw < w < tw$ and $\ph(s) \le \ph(t)$,}\\
w  & \text{otherwise.}
\end{cases}$$
Then $\kappa_s^\ph$ is an involution of $W$. We denote by $\KC_\ph$ the group of permutations 
of $W$ generated by the $\kappa_s^\ph$, for $s \in S$. A {\it Knuth left class} is 
an orbit for the group $\KC_\ph$. The following result is well-known~\cite{lusztig kl}:

\bigskip

\begin{prop}\label{prop:knuth}
Every left cell is a union of Knuth left classes.
\end{prop}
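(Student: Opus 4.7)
The plan is to prove that whenever $s \in S$ and $w' = \kappa_s^\ph(w) \neq w$, one has $w \siml w'$; since $\KC_\ph$ is generated by the involutions $\kappa_s^\ph$, this at once implies that every $\KC_\ph$-orbit, i.e.\ every Knuth left class, is contained in a single left cell, which is the content of the proposition.

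Assume we are in the first case of the definition of $\kappa_s^\ph$, so that $w' = sw$ and there exists $t \in S$ with $tw < w < sw < tsw$ and $\ph(s) \le \ph(t)$; the other case is dual. For the direction $sw \prel w$, the standard formula
$$C_s C_w \;=\; C_{sw} + \sum_{y < sw,\; sy < y} M_{y,w}^s \, C_y$$
(valid since $sw > w$) exhibits $C_{sw}$ in $C_s C_w$ with coefficient $1$, so $sw \prel w$ is immediate. For the reverse direction $w \prel sw$, I would apply $C_t$ to $C_{sw}$: since $tsw > sw$,
$$C_t C_{sw} \;=\; C_{tsw} + \sum_{y < tsw,\; ty < y} M_{y,sw}^t \, C_y.$$
The element $y = w$ lies in the index set (because $tw < w$ and $w < sw < tsw$), so the relation $w \prel sw$ will follow once we know that $M_{w,sw}^t \neq 0$.

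The main obstacle is precisely to establish this nonvanishing in the unequal parameter setting, where the positivity that underpins the equal parameter argument of Kazhdan--Lusztig is no longer available. The route I would take is to reduce the computation to the dihedral parabolic subalgebra $\HC_{\{s,t\}}$: the descent chain $tw < w < sw < tsw$ forces the $W_{\{s,t\}}$-component of $w$, in its coset decomposition relative to $W_{\{s,t\}}$, to realise the dihedral Knuth configuration, so that $M_{w,sw}^t$ is inherited from the analogous coefficient inside $\HC_{\{s,t\}}$; Geck's parabolic induction results (Lemma~\ref{lem:platitude} and Theorem~\ref{theo:geck}) ensure the compatibility of the two computations. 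In the dihedral case, Lusztig's explicit formulas for the Kazhdan--Lusztig basis in the unequal parameter dihedral Hecke algebra (cf.~\cite{lusztig}) identify the coefficient as $1$, the hypothesis $\ph(s) \le \ph(t)$ being exactly the condition required for this nonvanishing. Combining the two directions yields $w \siml sw$, as required.
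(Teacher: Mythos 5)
Your overall strategy is the standard one and is sound: it suffices to show $w\siml sw$ whenever $\kappa_s^\ph(w)=sw\neq w$, the relation $sw\prel w$ is immediate from the multiplication formula for $C_sC_w$ (your index set should read $y<w$, $sy<y$, but this is harmless), and everything hinges on the nonvanishing of $M^t_{w,sw}$ in $C_tC_{sw}$. (The paper itself offers no proof, quoting the literature, so there is nothing to match step by step.) The genuine gap is in your justification of that nonvanishing. The reduction to $\HC_{\{s,t\}}$ via Lemma~\ref{lem:platitude} and Theorem~\ref{theo:geck} does not work as stated: those results concern the decomposition $w=xw'$ with $x\in X_I$, i.e.\ left cosets $xW_I$ and induction of left ideals, whereas the configuration $tw<w<sw<tsw$ involves left multiplication by $s,t$ and therefore localizes to the dihedral component only for the decomposition $w=w'x$ along right cosets $W_{\{s,t\}}x$; and even after passing to the mirror statements (via $w\mapsto w^{-1}$), Geck's theorem compares the bases $(C_w)$ and $(G_w^I)$ and controls the preorders, but it does not give the equality of $M$-polynomials of $\HC$ with those of $\HC_{\{s,t\}}$ that your argument requires, so that ``compatibility of the two computations'' is exactly what is missing. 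Moreover the dihedral value you quote is wrong in general: when $\ph(s)<\ph(t)$ the relevant coefficient is $v^{\ph(t)-\ph(s)}+v^{\ph(s)-\ph(t)}$, not $1$ (it equals $1$ only when $\ph(s)=\ph(t)$); it is still nonzero, so your conclusion would survive, but the statement as written is inaccurate.

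In fact no parabolic reduction is needed, because $\ell(sw)=\ell(w)+1$ makes the computation elementary. Comparing coefficients of $T_w$ in $C_{sw}=C_sC_w-\sum_{z<w,\,sz<z}M^s_{z,w}C_z$ gives $p_{w,sw}=v^{-\ph(s)}$ exactly (the correction terms only involve $T_u$ with $u<w$). Since no $z$ satisfies $w<z<sw$, the defining conditions for $M^t_{w,sw}$ reduce to: $M^t_{w,sw}$ is invariant under the bar involution and $M^t_{w,sw}-v^{\ph(t)}p_{w,sw}=M^t_{w,sw}-v^{\ph(t)-\ph(s)}\in A_{<0}$. Hence $M^t_{w,sw}=v^{\ph(t)-\ph(s)}+v^{\ph(s)-\ph(t)}$ if $\ph(t)>\ph(s)$, $M^t_{w,sw}=1$ if $\ph(t)=\ph(s)$, and $M^t_{w,sw}=0$ if $\ph(t)<\ph(s)$; thus the hypothesis $\ph(s)\le\ph(t)$ in the definition of $\kappa_s^\ph$ is precisely the nonvanishing condition (and the result genuinely fails without it, as the dihedral cells in case (E) show). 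Replacing your dihedral-reduction step by this computation, which is the unequal-parameter input used by Geck in~\cite{geck}, completes the proof along the lines you propose.
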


\bigskip

\subsection{Knuth classes and Vogan classes} 
If $w \in W$, we set
$$\LC(w)=\{s \in S~|~\ell(sw) < \ell(w)\}.$$
It is called the {\it left descent set} of $w$. If $\s \in \VC_\ph$, then
\equat\label{eq:left-descent}
\LC(\s(w))=\LC(w).
\endequat
\begin{proof}
We only need to prove the result whenever $\s=\etoile{s,t}^L$ for some $(s,t) \in \EC_\ph$. 
Now, write $w=xw'$ with $w \in X_{s,t}$ and $w' \in W_{s,t}$ and let $u \in S$. 
Then $\s(w)=x\s(w')$. By Deodhar's Lemma, two cases may occur:

$\bullet$ If $ux \in X_I$, then $u \in \LC(w)$ (or $\LC(\s(w))$) if and only if $u \in \LC(x)$. 
So $u \in \LC(w)$ if and only if $u \in \LC(\s(w))$, as desired.

$\bullet$ If $ux \not\in X_I$, then $ux=xv$, for some $v \in \{s,t\}$. 
Therefore, $u \in \LC(w)$ (respectively $u \in \LC(w)$) 
if and only if $v \in \LC(w')$ (respectively $v \in \LC(\s(w'))$). But it is easy to check 
directly in the dihedral group $W_{s,t}$ that $\LC(w')=\LC(\s(w'))$. So 
again $u \in \LC(w)$ if and only if $u \in \LC(\s(w))$, as desired.
\end{proof}

\bigskip

\begin{prop}\label{prop:k-v}
If $C$ is a Knuth left class and if $\s \in \VC_\ph$, then $\s(C)$ is also a Knuth left class.
\end{prop}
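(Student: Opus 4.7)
The plan is to reduce to the case where $\s$ equals a single generator $\etoile{s,t}^L$ of $\VC_\ph$ (with $(s,t) \in \EC_\ph$), and to show that such a $\s$ sends Knuth-equivalent elements to Knuth-equivalent elements. By applying this statement to $\s^{-1} \in \VC_\ph$ as well, and using the disjointness of Knuth classes, one then concludes that $\s(C)$ is itself a Knuth class. Since Knuth equivalence is generated by the involutions $\kappa_r^\ph$, everything comes down to showing: for every $w \in W$ and every $r \in S$ with $\kappa_r^\ph(w) = rw \neq w$, the elements $\s(w)$ and $\s(rw)$ lie in the same Knuth class.

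To prove this, I would use the parabolic factorization $w = xu$ with $x \in X_{s,t}$ and $u \in W_{s,t}$, and then split into two cases via Deodhar's lemma. If $rx \in X_{s,t}$, then the parabolic decomposition of $rw$ is $(rx)u$; a direct computation from the definition of $\s$ --- which modifies only the right parabolic factor, and only when it lies in $\G_s \cup \G_t$ --- gives $\s(rw) = r\s(w)$. Combined with the invariance of the left descent set under $\s$ (\ref{eq:left-descent}), this shows that the same auxiliary element $t' \in S$ that witnesses the Knuth move $w \leftrightarrow rw$ also witnesses the Knuth move $\s(w) \leftrightarrow \s(rw)$, so a single elementary Knuth move suffices in this case.

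The remaining case, where $rx = xv$ for some $v \in \{s,t\}$, is the crux of the argument. Here $rw = x(vu)$ has a different right parabolic factor than $w$, so $\s$ and left multiplication by $r$ no longer commute. I would subdivide according to whether $u$ and $vu$ each belong to $\G_s \cup \G_t$, giving four subcases, and in each of them exhibit an explicit chain of elementary Knuth moves connecting $\s(w)$ to $\s(rw)$. The construction relies on the explicit description of $\G_s$, $\G_t$, and $\etoile{s,t}$ inside the dihedral group $W_{s,t}$ as recalled in \cite[\S{7}]{lusztig}, on the left-descent invariance provided by (\ref{eq:left-descent}), and on the weight-function constraint built into the definition of $\EC_\ph$. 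The main obstacle is therefore entirely combinatorial and lives inside $W_{s,t}$: once the dihedral picture is understood, the required chain of moves is short in every subcase, and the verification is finite.
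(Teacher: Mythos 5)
Your skeleton coincides with the paper's proof: reduction to a single generator $\s=\etoile{s,t}^L$ and a single Knuth move $\kappa_r^\ph(w)=rw\neq w$, the factorization $w=xu$ with $x\in X_{s,t}$, $u\in W_{s,t}$, the Deodhar dichotomy on $rx$, and the commuting case $rx\in X_{s,t}$ handled by a single move via (\ref{eq:left-descent}). The gap is in the crux case $rx=xv$, $v\in\{s,t\}$, which you defer to a ``finite verification inside $W_{s,t}$''. As stated, your plan omits the two observations that make that case work. First, Deodhar's Lemma must also be applied to the \emph{witness} generator $r'$ (the element with $r'w<w<rw<r'rw$ and $\ph(r)\le\ph(r')$): the chain of inequalities forces $r'x\notin X_{s,t}$, hence $r'x=xv'$ with $\{v,v'\}=\{s,t\}$, and it is this step that (i) shows both components $u$ and $vu$ automatically lie in $\G_s\dotcup\G_t$, so that three of your four subcases are empty --- in particular the mixed subcases, where $\s$ would fix one of $w,rw$ and move the other, cannot be settled by ``exhibiting a chain''; the correct move is to prove they do not occur --- and (ii) yields the weight identities $\ph(r)=\ph(v)$, $\ph(r')=\ph(v')$, which in the even-order case, combined with $\ph(s)<\ph(t)$ and $\ph(r)\le\ph(r')$, pin down $v=s$, $v'=t$, exactly what is needed to compute the action of $\s$ on both components.

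Second, and more importantly, in the odd-order case the element connecting $\s(w)$ and $\s(rw)$ is not $r$ at all: one computes $\s(rw)=r'\s(w)$, so the connecting move is $\kappa_{r'}^\ph$ with $r$ as witness, and this is a legal Knuth move only because the forced equality $\ph(r)=\ph(r')$ (from $\ph(v)=\ph(v')$ when the order of $st$ is odd) reverses the weight inequality. So the verification is neither unconditional nor confined to $W_{s,t}$: it hinges on the weight bookkeeping between $r,r'$ and $s,t$ supplied by the two Deodhar conjugations, together with (\ref{eq:left-descent}). Once these points are added, your outline becomes the paper's argument (whose operative case split in the crux is odd versus even order of $st$, with a single Knuth move --- by $r'$, respectively by $r$ --- in each case); without them, the assertion that ``the required chain of moves is short in every subcase'' is precisely the content that remains to be proved.
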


\bigskip

\begin{proof}
It is sufficient to show that, if $s \in S$, if $(t,u) \in \EC_\ph$ and if $w \in W$, 
then $\etoile{t,u}^L(w)$ and $\etoile{t,u}^L(\kappa_s^\ph(w))$ 
are in the same Knuth left class. If $\kappa_s^\ph(w)=w$, then this is obvious. So we may 
(and we will) assume that $\kappa_s(w) \neq w$. Therefore, there exists $s' \in S$ 
such that $s'w < w < sw < s'sw$ or $s'sw < sw < w < s'w$, and $\ph(s) \le \ph(s')$. 
So $\kappa_s^\ph(w)=sw$ and, by replacing if necessary $w$ by $sw$, we may assume that 
$s'w < w < sw < s'sw$. We write $\s=\etoile{t,u}^L$ and $w=xw'$, with $w' \in W_I$. 
Two cases may occur:

\medskip

\noindent{\it First case: assume that $sx \in X_I$.}  
Then $\s(sw)=s\s(w)$. By~(\ref{eq:left-descent}), we have
$$s' \s(w) < \s(w) < s\s(w) = \s(sw) < s'\s(sw) = s's\s(w).$$
So $\kappa_s(\s(w))=\s(sw)$, so $\s(w)$ and $\s(\kappa_s^\ph(w))$ are in the same Knuth left class.

\medskip

\noindent{\it Second case: assume that $sx \not\in X_I$.}   
Then $sx=xt'$ with $t' \in \{t,u\}$ by Deodhar's Lemma. 
Therefore,
$$s'xw' < xw' < xt'w' < s'xt'w'.$$
This shows that $w' < t'w'$ and $s'x \not\in X_I$. Therefore, again by Deodhar's Lemma, 
we have $s'x=xu'$ for some $u' \in \{t,u\}$. Hence
$$u'w' < w' < t'w' < u't'w'$$
and $t' \neq u'$. So $\{t,u\}=\{t',u'\}$. Moreover, $\ph(s)=\ph(t')$ and $\ph(s')=\ph(u')$. 
In this situation, two cases may occur:

\medskip

$\bullet$ Assume that $tu$ has odd order. In this case, $\ph(t')=\ph(u')$ and so $\ph(s)=\ph(s')$. Moreover, 
$$\s(w)=x w_{t,u} w'\qquad\text{and}\qquad \s(sw)=x w_{t,u} t'w'=x u' w_{t,u}w'=s'\s(w).$$
Therefore,
$$s\s(w) < \s(w) < s'\s(w)=\s(sw) < ss' \s(w),$$
and so $\s(\kappa_s^\ph(w))=\kappa_{s'}^\ph(w)$ since $\ph(s)=\ph(s')$. This shows again that 
$\s(w)$ and $\s(\kappa_s^\ph(w))$ are in the same Knuth left cell.

\medskip

$\bullet$ Assume that $tu$ has even order. Since $\{t',u'\}=\{t,u\}$, $\ph(t) < \ph(u)$ and 
$\ph(t')=\ph(s) \le \ph(s')=\ph(u')$, we have $t'=t$ and $u'=u$. In particular,
$$uw' < w' < tw' < utw'.$$
This shows that $w'$, $tw' \in \G_t \dotcup \G_u$, so $\s(w')=w't$ and $\s(tw')=tw't=t\s(w')$. 
Again, by (\ref{eq:left-descent}),
$$u\s(w') < \s(w') < t\s(w')=\s(tw') < ut \s(w')$$
and so
$$s'\s(w) < \s(w) < s\s(w)=\s(sw) < s's\s(w)$$
and so $\s(sw)=\kappa_s^\ph(\s(w))$, as desired.
\end{proof}

\bigskip

\section{Commentaries}

\medskip

\subsection{} 
Since the map $W \to W$, $w \mapsto w^{-1}$ exchanges left cells and right cells, 
and exchanges left descent sets and right descent sets, all the results of this 
paper can be transposed to results about right cells.

\bigskip

\subsection{} 
As it has been seen in Type $H_4$, the group $\VC_\ph$ can become enormous, even in small rank, 
so it is not reasonable to compute generalized Vogan left classes by computing 
completely the map $\tau^\ph$. Computation can be performed by imitating 
the inductive definition of classical Vogan left classes. With our point-of-view, 
this amounts to start with the partition given by the fibers of the map 
$\RC : W \to \PC(S)$, and to refine it successively using the action of the 
generators of $\VC_\ph$, and to stop whenever the partition does not refine any more. 

More precisely, let $\VC_\ph(k)$ denote the set 
of elements of $\VC_\ph$ which can be expressed as the product of at most 
$k$ involutions of the form $\etoile{s,t}^L$, for $(s,t) \in \EC_\ph$ 
and let $\tau^\ph(k) : W \to \maps(\VC_\ph(k),\PC(S))$ be the map 
obtained in a similar way as $\tau^\ph$. This map can be easily 
computed inductively for small values of $k$, and gives rise 
to a partition $\CC(k)$ of $W$ which is {\it a priori} coarser than the partition 
into generalized Vogan left classes. However, when $\CC(k)=\CC(k+1)$, 
this means that $\CC(k)$ coincides with the partition into generalized Vogan left classes.

For instance, in type $H_4$, this algorithm stops at $k=5$. Computing the generators 
of $\VC_\ph$ takes less than 4 minutes on a very basic computer, while the deduction of 
Vogan classes is then almost immediate.

\bigskip


\begin{thebibliography}{AAAA}
\bibitem[Alv]{alvis} {\sc D. Alvis}, 
The left cells of the Coxeter group of type $H_4$, 
{\it J. Algebra} {\bf 107} (1987), 160-168.

\bibitem[Bon]{bonnafe b} 
{\sc C. Bonnaf\'e}, On Kazhdan-Lusztig cells in type $B$, 
{\it Journal of Alg. Comb.} {\bf 31} (2010), 53-82.

\bibitem[Ge1]{geck induction} {\sc M. Geck}, 
On the induction of Kazhdan-Lusztig cells, 
{\it Bull. London Math. Soc.} {\bf 35} (2003), 608-614.

\bibitem[Ge2]{geck} {\sc M. Geck}, 
A generalised $\tau$-invariant for the unequal parameter case, 
preprint (2014), {\tt arXiv:1405.5736}.

\bibitem[KaLu]{KL} {\sc D. Kazhdan \& G. Lusztig}, 
Representations of Coxeter groups and Hecke algebras, 
{\it Invent. Math.} {\bf 53} (1979), 165-184.

\bibitem[Lus1]{lusztig kl} {\sc G. Lusztig}, Left cells in Weyl groups, {\it Lie group representations, I}
(College Park, Md., 1982/1983), 99-111, {\it Lecture Notes in Math.} {\bf 1024}, Springer, Berlin, 1983.

\bibitem[Lus2]{lusztig canonical} {\sc G. Lusztig}, 
Canonical bases arising from quantized enveloping algebras, 
{\it J. Amer. Math. Soc.} {\bf 3} (1990), 447-498.

\bibitem[Lus3]{lusztig} 
{\sc G.~Lusztig}, 
\emph{Hecke algebras with unequal parameters}, CRM Monograph Series 
{\bf 18}, American Mathematical Society, Providence, RI (2003), 136 pp.

\bibitem[Vog]{vogan}
{\sc D. Vogan}, A generalized $\tau$-invariant for the primitive spectrum of a semisimple Lie algebra,
{\it Math. Ann.} {\bf 242} (1979), 209-224.

\end{thebibliography}
\end{document}